\newcommand{\subtitle}[1]{%
  \posttitle{%
    \par\end{center}
    \begin{center}\large#1\end{center}
    \vskip0.5em}%
}
\numberwithin{equation}{section}
\newtheorem{theorem}{Theorem}[section]
\newtheorem{proposition}{Proposition}[section]
\newtheorem{lemma}{Lemma}[section]
\newtheorem{definition}{Definition}[section]
\theoremstyle{remark}
\newtheorem*{remark}{Remark}
\newcommand{\ue}{u_{\epsilon}}
\title{BOUNDED TIME INVERSE SCATTERING FOR SEMILINEAR DIRAC EQUATION}
\author{Yuchao Yi\thanks{Department of Mathematics, University of California San Diego, La Jolla, California, United States of America; Email: yuyi@ucsd.edu}}
\date{\vspace{-5ex}}
\begin{document}

\maketitle

\begin{abstract}
    In this paper, we present the first uniqueness result on the bounded time inverse scattering problem for a semilinear Dirac equation with smooth nonlinearity $F(x, z)$ where $(x, z)\in \mathbb{R}^3\times \mathbb{C}^4$ and $x$ is the spatial variable. We show that the solution map, which sends initial data at time 0 to the solution at time $T$, uniquely determines $F(x, z)$ on $x \in \mathbb{R}^3$ and $|z| \leq M$, where $M$ is a constant depend on the solution map, under the assumption that $\partial_z F(x, 0)$ and $\partial^2_z F(x, 0)$ are known. In the proof, we construct a sequence of collisions approaching the initial timeline to simulate a boundary collision. This technique enables us to overcome the difficulties of this hyperbolic system without assumptions on the nonlinearity structure.
\end{abstract}

%%%%%%%%%%%%%%%%%%%%%%%%%%%%%%%%%%%%%%%%%%%%%%%%%%%%%%%%%%%%%%%%%%%%%%%
\section{Introduction}
% The Dirac equation, introduced by Paul Dirac in \cite{Dir}, is fundamental in quantum mechanics and quantum field theory for describing fermions. The nonlinear Dirac equation, which naturally arises as a simplification of the Dirac-Maxwell system (see for example \cite{Gro}), models interacting fermions across various physical contexts and has consequently been extensively studied. One notable example is the cubic nonlinearity $\left<\beta u, u\right>u$, which models self-interacting fermions in certain quantum field settings (see for example \cite{HehVonKerNes}).

The Dirac equation, introduced by Paul Dirac in \cite{Dir}, is fundamental in quantum mechanics and quantum field theory for describing fermions. Its nonlinear variants, which can be seen either as self‑interacting spinors (e.g. the cubic term $\langle\beta u, u\rangle u$) or as reductions of the coupled Dirac-Maxwell (and ultimately Einstein-Cartan-Sciama–Kibble) systems, have attracted extensive mathematical study. In particular, extensive symmetry classifications and explicit solutions were developed by Fushchich and Zhdanov \cite{FZ89}, while Esteban and S\'er\'e \cite{ES95} proved the existence of stationary (time-harmonic) states via variational methods. A recent centenary review of Dirac‑equation derivations appears in \cite{Sim25}, and the cubic self‑interaction induced by spacetime torsion in Einstein-Cartan-Sciama-Kibble gravity is discussed in \cite{HVDN76}. Finally, the full Maxwell-Dirac coupling has been shown to be well-posed and structurally rich in the works of Chadam \cite{Cha73}, Radford \cite{Rad03}, Glassey and Strauss \cite{GS79}, and Sparber and Markowich \cite{SM03}.

In this paper, we consider an inverse problem for 4 by 4 nonlinear Dirac equations of the following form
\begin{equation}\label{nonlinear Dirac equation}
    \begin{cases}
        i \partial_t u + i\alpha \cdot \nabla u - \beta u = F(x, u) \text{ on } (0, T) \times \mathbb{R}^3\\
        u(0, x) = \phi(x)
    \end{cases}
\end{equation}
where the notations mean the following:
\begin{enumerate}
    \item $u: [0, T]\times \mathbb{R}^3 \xrightarrow{} \mathbb{C}^4$ is the solution of \eqref{nonlinear Dirac equation}, with the variables denoted by $(t, x)$;
    \item For $\xi \in \mathbb{R}^3$, $\alpha \cdot \xi = \sum_{j=1}^{3} \xi_j\alpha_j$, and $\alpha_j$ are $4 \times 4$ Pauli matrices defined by
    \begin{equation*}
        \alpha_j = \begin{pmatrix}
            0& \sigma_j \\ \sigma_j& 0
        \end{pmatrix},
        \sigma_1 = \begin{pmatrix}
            0& 1 \\ 1& 0
        \end{pmatrix},
        \sigma_2 = \begin{pmatrix}
            0& -i \\ i& 0
        \end{pmatrix},
        \sigma_3 = \begin{pmatrix}
            1& 0 \\ 0& -1
        \end{pmatrix};
    \end{equation*}
    \item $\beta$ is the $4 \times 4$ matrix
    \begin{equation*}
        \beta = \begin{pmatrix}
            I_2 & 0 \\ 0 & -I_2
        \end{pmatrix}
    \end{equation*}
    and $I_2$ is the $2 \times 2$ identity matrix;
    \item $F: \mathbb{R}^3 \times \mathbb{C}^4 \xrightarrow{} \mathbb{C}^4$ is a smooth nonlinearity with $F(x, 0) = 0$, and its partial derivatives of all orders are bounded on sets of the form $\mathbb{R}^3 \times K$, where $K \subset \mathbb{C}^4$ compact. We denote the variables as $(x, z)$.
\end{enumerate}

We now phrase the main question of our paper. Assume that given any initial data $u(0) = \phi$, we are able to read the solution $u(T)$ at time $T$; in other words, we have complete knowledge of the map $u(0)\mapsto u(T)$. Does this uniquely determine the nonlinearity $F$? We will give an affirmative answer for data in the appropriate space of functions to be specified below.

To precisely state our main result, we first invoke a local existence result from \cite{Rau}, adapted to our setting.
\begin{theorem}\label{local existence theorem}
    (\cite{Rau} Theorem 6.3.1) If $s > 3/2$, then there is a $T>0$ and a unique solution $u \in C([0, T]; H^s(\mathbb{R}^3))$ to the semilinear initial value problem defined by the partial differential equation \eqref{nonlinear Dirac equation} together with the initial condition
    \begin{equation*}
    u(0, x) = \phi(x) \in H^s(\mathbb{R}^3).
    \end{equation*}
    The time $T$ can be chosen uniformly for $\phi$ from bounded subset of $H^s(\mathbb{R}^d)$. Consequently, there is a $T^* \in ]0, \infty]$ and a maximal solution $u \in C([0, T^*[; H^s(\mathbb{R}^d))$. If $T^* < \infty$, then
    \begin{equation*}
    \lim_{t \to T^*} \| u(t) \|_{H^s(\mathbb{R}^d)} = \infty.
    \end{equation*}
\end{theorem}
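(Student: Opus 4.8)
The statement is a textbook local well-posedness result for a semilinear symmetric hyperbolic system, and I would prove it by the standard Picard iteration scheme combined with a linear energy estimate and Moser-type nonlinear estimates. First I would observe that, multiplying \eqref{nonlinear Dirac equation} by $-i$, the equation takes the form $\partial_t u + \alpha\cdot\nabla u + i\beta u = -iF(x,u)$, where the principal part $\alpha\cdot\nabla$ is a constant-coefficient first-order operator with Hermitian coefficient matrices $\alpha_j$; hence $\alpha\cdot\nabla$ is skew-adjoint on $L^2(\mathbb{R}^3;\mathbb{C}^4)$ and $\mathrm{Re}\,\langle i\beta u,u\rangle = 0$ since $\beta$ is Hermitian. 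Consequently the linear evolution $\partial_t u + \alpha\cdot\nabla u + i\beta u = g$ satisfies $\frac{d}{dt}\|u\|_{L^2}\le\|g\|_{L^2}$, and since the coefficients are constant the spatial derivatives commute through, yielding the energy inequality $\|u(t)\|_{H^s}\le\|u(0)\|_{H^s}+\int_0^t\|g(\tau)\|_{H^s}\,d\tau$ for every $s\ge 0$ and the existence/uniqueness of the linear solution map.

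Next I would set up the iteration: let $u^{(0)}(t)\equiv\phi$ and let $u^{(n+1)}$ solve the linear problem $\partial_t u^{(n+1)}+\alpha\cdot\nabla u^{(n+1)}+i\beta u^{(n+1)}=-iF(x,u^{(n)})$ with $u^{(n+1)}(0)=\phi$. Since $s>3/2$, $H^s(\mathbb{R}^3)$ is a Banach algebra embedded in $L^\infty$, and the hypothesis that $F$ is smooth with $F(x,0)=0$ and all $x$- and $z$-derivatives bounded on $\mathbb{R}^3\times K$ for compact $K$ gives the Moser estimate $\|F(\cdot,v)\|_{H^s}\le C(\|v\|_{H^s})\,\|v\|_{H^s}$ with $C$ nondecreasing and locally bounded. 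Feeding this into the energy inequality shows that for $R:=2\|\phi\|_{H^s}$ there is $T=T(R)>0$, depending on $\phi$ only through $\|\phi\|_{H^s}$, for which the ball $\{\sup_{[0,T]}\|u\|_{H^s}\le R\}$ in $C([0,T];H^s)$ is preserved by the iteration. This already yields the claimed uniformity of $T$ over bounded subsets of $H^s$.

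Then I would prove convergence, and this is the one genuinely delicate point. The difference $w^{(n)}:=u^{(n+1)}-u^{(n)}$ satisfies $\partial_t w^{(n)}+\alpha\cdot\nabla w^{(n)}+i\beta w^{(n)}=-i\bigl(F(\cdot,u^{(n)})-F(\cdot,u^{(n-1)})\bigr)$, and although $F(\cdot,u^{(n)})-F(\cdot,u^{(n-1)})$ can be estimated in $L^2$ by $C(R)\,\|w^{(n-1)}\|_{L^2}$, attempting to close this directly in $H^s$ loses one derivative. So I would run the contraction in the weaker norm $C([0,T];L^2)$, obtaining (after possibly shrinking $T$) a limit $u\in C([0,T];L^2)$; the uniform $H^s$ bound then gives $u\in L^\infty([0,T];H^s)$ and, by interpolation, convergence of $u^{(n)}$ to $u$ in $C([0,T];H^{s'})$ for every $s'<s$, which is enough to pass to the limit in the equation. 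Upgrading weak-in-time to strong-in-time $H^s$ continuity of $u$ is the standard subtlety for hyperbolic equations; I would resolve it by the Bona--Smith mollification argument, or directly by showing $t\mapsto\|u(t)\|_{H^s}$ is continuous from the energy identity for smoothed data. Uniqueness follows from the $L^2$ energy estimate for the difference of two solutions together with Gronwall.

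Finally, the maximal time $T^*$ and the blow-up alternative come from the usual continuation argument: if $\|u(t)\|_{H^s}$ stayed bounded as $t\uparrow T^*<\infty$, the uniform local existence time would let one restart the solution past $T^*$, contradicting maximality, so $\|u(t)\|_{H^s}\to\infty$ as $t\to T^*$. I expect the main obstacle to be precisely the strong-in-time $H^s$ continuity of the fixed point; the energy and Moser estimates themselves are routine.
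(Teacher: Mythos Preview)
The paper does not give its own proof of this theorem; it is simply quoted from \cite{Rau} (Theorem~6.3.1) as a black-box local well-posedness result. Your sketch is correct and is precisely the standard argument one finds in Rauch's book: symmetric-hyperbolic energy estimate, Picard iteration with Moser-type bounds (using $s>3/2$ so that $H^s$ is an algebra), contraction in a low norm with high-norm boundedness, recovery of strong $H^s$-continuity via Bona--Smith/mollification, and the continuation criterion from uniformity of the existence time on $H^s$-bounded sets.
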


The above theorem allows us to define bounded time solution map for sufficiently small initial data.

\begin{definition}
    Given $s > 3/2$ and $\delta>0$, denote
    \begin{equation*}
        H^s_{\delta}: = \{\phi \in H^s(\mathbb{R}^3): ||\phi||_{H^s} < \delta\}.
    \end{equation*}
    By Theorem \ref{local existence theorem} there exists some $T>0$ such that for any initial data $\phi \in H^s_{\delta}$, \eqref{nonlinear Dirac equation} has a unique solution $u$ up to time $T$. Then the time $T$ solution map is defined on $H^s_{\delta}$ by
    \begin{equation*}
        S(\phi) = u(T).
    \end{equation*}
\end{definition}

That is, given any energy-wise sufficiently small initial data, the time $T$ solution map returns the solution at time $T$. We can now state the main result.
\begin{theorem}\label{main theorem}
    Given smooth nonlinearity $F_1$ and $F_2$, suppose for some $T>0$ they each have a well-defined time $T$ solution map $S_1$, $S_2$ on the same domain $H^s_{\delta}$. Then $S_1 = S_2$ implies $\partial^3_z F_1(x, z) \equiv \partial^3_z F_2(x, z)$ for all $(x, z) \in \mathbb{R}^3 \times B_M$ where $B_M \subset \mathbb{C}^4$ is the closed ball centered at 0 with radius $M$, and
    \begin{equation*}
        M = \sup \{||\phi||_{L^{\infty}}: \phi \in H^s_{\delta} \cap C^{\infty}(\mathbb{R}^3)\}.
    \end{equation*}
    In addition, if for all $x \in \mathbb{R}^3$,
    \begin{equation*}
        \partial_z F_1(x, 0) \equiv \partial_z F_2(x, 0), \quad \partial^2_zF_1(x, 0) \equiv \partial^2_z F_2(x, 0),
    \end{equation*}
    then $F_1(x, z) = F_2(x, z)$ for all $(x, z) \in \mathbb{R}^3 \times B_M$.
\end{theorem}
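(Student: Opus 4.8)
The plan is to reduce the statement to the first conclusion and then prove that by a third--order linearization around a \emph{large, essentially constant} background, forcing the nonlinear interaction to occur arbitrarily close to the time slice $\{t=0\}$ — the "simulated boundary collision." For the reduction: if $\partial_z^3 F_1\equiv\partial_z^3 F_2$ on $\mathbb{R}^3\times B_M$, then for $z\in B_M$ and $t\in[0,1]$ we have $tz\in B_M$, so
\[
\partial_z^2 F_i(x,z)=\partial_z^2 F_i(x,0)+\int_0^1\partial_z^3 F_i(x,tz)[z]\,dt ,
\]
and both terms on the right are independent of $i$; hence $\partial_z^2 F_1\equiv\partial_z^2 F_2$ on $\mathbb{R}^3\times B_M$. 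Integrating once more gives $\partial_z F_1\equiv\partial_z F_2$, and once more, using $F_i(x,0)=0$, gives $F_1\equiv F_2$ on $\mathbb{R}^3\times B_M$. So it suffices to show $\partial_z^3 F_1(x_0,z_0)=\partial_z^3 F_2(x_0,z_0)$ for every $x_0\in\mathbb{R}^3$ and every $z_0$ with $|z_0|<M$; the boundary case $|z_0|=M$ then follows by continuity of the smooth maps $\partial_z^3 F_i$.

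Fix $(x_0,z_0)$ with $|z_0|<M$. By the definition of $M$ we may pick a smooth $\psi\in H^s_\delta$ with $\psi\equiv z_0$ on a fixed neighborhood of $x_0$ (for instance $\psi=z_0\chi$ with a scalar cutoff $\chi$). Let $w_i$ solve \eqref{nonlinear Dirac equation} with $F=F_i$ and data $\psi$; by persistence of regularity $w_i$ is smooth and $w_i(t,x)=z_0+O(t)$ near $(0,x_0)$. Feed in data $\psi+\epsilon_1\phi_1+\epsilon_2\phi_2+\epsilon_3\phi_3$; smooth dependence on the data (Theorem~\ref{local existence theorem}) lets us differentiate, and $v^{(i)}_{123}:=\partial_{\epsilon_1}\partial_{\epsilon_2}\partial_{\epsilon_3}\big|_{\epsilon=0}u$ solves a linear Dirac equation with zero initial data whose source is
\[
\partial_z F_i(x,w_i)\,v^{(i)}_{123}+\big(\text{bilinear terms in the }v^{(i)}_j,v^{(i)}_{jk}\text{ weighted by }\partial_z^2 F_i(x,w_i)\big)+\partial_z^3 F_i(x,w_i)[v^{(i)}_1,v^{(i)}_2,v^{(i)}_3],
\]
where $v^{(i)}_j$ solves the equation linearized around $w_i$ with data $\phi_j$ and $v^{(i)}_{jk}$ are the second variations. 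From $S_1=S_2$ we obtain, for every such $\psi,\phi_j$: $w_1(T)=w_2(T)$; the linearized Dirac solution operators around $\psi$ agree, $U_1=U_2$ (since $v^{(1)}_j(T)=v^{(2)}_j(T)$ for all $\phi_j$); and $v^{(1)}_{123}(T)=v^{(2)}_{123}(T)$.

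Now choose $\phi_1,\phi_2,\phi_3$ to be concentrating Gaussian--beam solutions of the linearized Dirac equation (small in $H^s$, large frequency $\lambda$), traveling along three null geodesics issued at $t=0$ from points $y_j=x_0+r_k\hat y_j$, arranged so that all three pass through a common point $(t_*,x_*)$ with $t_*\sim r_k\to 0$, so that a suitable sum $\xi_1+\xi_2+\xi_3$ of their null codirections at $(t_*,x_*)$ is again null (a codimension--one, generically solvable condition on the triple), and with prescribed polarization amplitudes $a_j$ in the Dirac null--polarization spaces. Since the sum of two forward--null covectors is never null, neither the products $v_jv_k$ nor the second variations $v^{(i)}_{jk}$ generate any new concentrating beam, and the feedback term $\partial_z F_i(x,w_i)v^{(i)}_{123}$ only dresses transport along beams already present; the triple product, however, sources a \emph{new} Gaussian beam from $(t_*,x_*)$ along the null geodesic $\gamma_*$ with codirection $\xi_1+\xi_2+\xi_3$, and when $\gamma_*$ meets $\{t=T\}$ at a point $x_T$ its amplitude there is, to leading order in $\lambda$,
\[
C\lambda^{\mu}\,\partial_z^3 F_i\big(x_*,w_i(t_*,x_*)\big)[a_1,a_2,a_3]\,\mathcal{T}_i ,
\]
with $C\lambda^\mu$ a normalization independent of $i$ and of the geometry, and $\mathcal{T}_i$ the beam transport along $\gamma_*$ from $(t_*,x_*)$ to $(T,x_T)$ for the equation linearized around $w_i$; no other beam reaches $(T,x_T)$. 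Using $v^{(1)}_{123}(T)=v^{(2)}_{123}(T)$ near $x_T$, dividing by $C\lambda^\mu$ and letting $\lambda\to\infty$ (which kills the lower--order corrections and the off--beam contributions), we get $\partial_z^3 F_1(x_*,w_1(t_*,x_*))[a]\,\mathcal{T}_1=\partial_z^3 F_2(x_*,w_2(t_*,x_*))[a]\,\mathcal{T}_2$ for each $r_k$. Sending $r_k\to 0$: $w_i(t_*,x_*)\to\psi(x_0)=z_0$; $\gamma_*$ converges to the null geodesic $\bar\gamma_*$ issued from $(0,x_0)$; and $\mathcal{T}_i$, whose only $F_i$--dependence is through the Dirac transport along $\gamma_*$, converges to the beam transport of $U_i$ along $\bar\gamma_*$ from $\{t=0\}$ to $\{t=T\}$, because the dropped piece of $\bar\gamma_*$ over $t\in(0,t_*)$ has length $O(r_k)$ and contributes transport $\to I$. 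Since $U_1=U_2$, these limiting transports coincide and are invertible, so $\partial_z^3 F_1(x_0,z_0)[a_1,a_2,a_3]=\partial_z^3 F_2(x_0,z_0)[a_1,a_2,a_3]$; decomposing into oscillation modes and varying the polarizations and null directions recovers the full trilinear form, hence $\partial_z^3 F_1(x_0,z_0)=\partial_z^3 F_2(x_0,z_0)$, and with the reduction this proves the theorem.

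The algebra of linearization is routine; the effort is in the last steps. The central computation is the construction of three interacting Dirac beams and the evaluation of the principal amplitude of their interaction so that $\partial_z^3 F_i$ appears with a genuinely nonzero coefficient, which requires tracking the polarization projections built into the Dirac symbol. The truly delicate point is making the $\lambda\to\infty$ error estimates uniform enough to survive the subsequent $r_k\to 0$ limit, and showing in that limit that the interaction amplitude converges to $\partial_z^3 F_i$ at the \emph{prescribed} background value $z_0$ while the outgoing transport $\mathcal{T}_i$ simultaneously sheds its dependence on the unknown lower--order coefficients $\partial_z F_i,\partial_z^2 F_i$ and becomes pinned down by the already--matched first linearization $U_1=U_2$. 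This double service performed by the sequence of collisions approaching the initial timeline — fixing the background value, and trivializing the difference in propagation — is precisely what substitutes here for a boundary determination argument in the absence of a genuine boundary.
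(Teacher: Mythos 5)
Your proposal has the paper's central idea in essentially the same form: linearize three times around a smooth background solution whose value at $x_0$ is $z_0$, arrange a three-wave interaction at a point $(t_*,x_*)$, let the collision point tend to $\{t=0\}$ so that the background value at the collision converges to $z_0$ and the post-collision transport converges to the transport governed by the already-matched first linearization, and then deduce information about $\partial_z^3F_i$ at $(x_0,z_0)$. The substitution of Gaussian beams for the paper's conormal/Lagrangian distributions is a cosmetic difference, and your reduction of the second conclusion to the first via Taylor integration and the hypotheses on $\partial_zF_i(x,0)$, $\partial_z^2F_i(x,0)$, $F_i(x,0)=0$ is the same as the paper's closing paragraph.

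The genuine gap is in the last sentence: ``decomposing into oscillation modes and varying the polarizations and null directions recovers the full trilinear form.'' This waves at a nontrivial piece of linear algebra that the paper devotes the whole of Section~\ref{section: proof of main theorem} to, and without which the conclusion $\partial_z^3F_1(x_0,z_0)=\partial_z^3F_2(x_0,z_0)$ does not follow. Concretely, two restrictions must be overcome. \emph{First}, each input polarization $a_j$ is confined to the two-dimensional space $\ker(-1-\alpha\cdot\xi_j)$ because the causal inverse of the first-order Dirac operator forces the principal symbol onto $\ker p$; the paper recovers the full domain by combining the choices $\eta_j=(1,e_j)$ and $\eta_j'=(1,-e_j)$, which span $\mathbb{C}^4$, and invoking trilinearity. \emph{Second}, the outgoing amplitude is not $\partial_z^3F_i[a_1,a_2,a_3]\,\mathcal{T}_i$ with an invertible $\mathcal{T}_i$: the initial value of the post-collision transport along $\gamma_*$ is $C\,\widetilde p\,\sigma(F^{(3)})$, i.e.\ the source is first projected by $1-\alpha\cdot\xi_0$ onto a two-dimensional subspace, and while the fundamental matrix of the transport ODE is invertible on all of $\mathbb{C}^4$, the matching of the two transports obtained from $S_1=S_2$ (the analogue of Lemma~\ref{solution map determines initial data to final data}) is only known on $\ker(-1-\alpha\cdot\xi_0)$, which is exactly the image of that projection. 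So each measurement only yields $(1-\alpha\cdot\xi_0)\partial_z^3F_1[\cdot]=(1-\alpha\cdot\xi_0)\partial_z^3F_2[\cdot]$. The paper disposes of this by taking two distinct admissible outgoing null codirections $\eta_0,\eta_0'$ whose projection kernels intersect trivially and invoking Lemma~\ref{linear algebra lemma}; one must in fact exhibit such a pair explicitly (the paper does: $\eta_0=(1,-\tfrac13,\tfrac23,\tfrac23)$ and $\eta_0'=(1,\tfrac23,-\tfrac13,\tfrac23)$) since for arbitrary lightlike $\xi_0,\xi_0'$ the kernels could overlap. Until both of these reconstruction steps are carried out, your argument establishes only a family of two-dimensional projections of a constrained restriction of the trilinear form, not $\partial_z^3F_1=\partial_z^3F_2$.
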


Although our result is proved for smooth small-data solutions, it applies directly to several physical models governed by semilinear Dirac equations. In Einstein-Cartan-Sciama-Kibble gravity, coupling the Dirac field to torsion produces exactly the cubic, four-fermion self-interaction studied here. In condensed matter, “Dirac materials” such as graphene or topological insulators, effective Dirac equations with nonlinear corrections capture electron-electron and electron-phonon interactions, so one could, at least in principle, use bounded time scattering data to recover the interaction law. Similar semilinear Dirac systems also arise in nonlinear optics as envelope equations for spinor-like wavepackets in photonic lattices.

For simplicity of the notations, in the remaining part of the paper we shall only use $F$ instead of $F_1$ and $F_2$, and prove that $\partial_z^3 F$ can be determined from the solution map. Specifically, whenever we say $A$ determines $B$, we mean the following: for $i=1,2$, let $A_i, B_i$ be some quantities given by nonlinearity $F_i$, then $A_1 = A_2$ implies $B_1 = B_2$.

Below we make several remarks about our result and its consequences.

\begin{remark}
    \hspace*{\parindent}
    \begin{enumerate}
        \item Note that if $\partial^3_z F(x, z)$ has been determined on $\mathbb{R}^3 \times B_M$, then for any $x \in \mathbb{R}^3$, $F(x, z)$ is determined up to a degree 2 polynomial in $z$ whose constant term is 0. With the extra assumption on the first and second derivatives, one can then determine the nonlinearity. We provide an example to illustrate the theorem in terms of the most important nonlinearity for the Dirac equation, namely the cubic nonlinearity. Suppose there exists some smooth nonlinearity $F$ with vanishing first and second derivative at $z = 0$, and whose time $T$ solution map coincides with that of the cubic nonlinearity. Then, $F$ must match with the cubic nonlinearity in a neighborhood of 0.
        \item If one can take $s < 3/2$ by imposing additional assumptions on $F$ such that the bounded solution map is well-defined for small data (for instance, see \cite{EscVeg} and \cite{MacNakOza}), then there is no uniform bound on the $L^{\infty}$ norm in $H^s_{\delta} \cap C^{\infty}(\mathbb{R}^3)$. In this case, $M = + \infty$, that is $F$ can be determined on the entire $\mathbb{R}^3 \times \mathbb{C}^4$. (For example, for $\phi \in C^{\infty}_c(\mathbb{R}^3)$, we can consider $\phi_{\epsilon}(x) = \epsilon^{-k}\phi(x/\epsilon)$ for $k+s < n/2$, then $||\phi_{\epsilon}||_{H^s} \xrightarrow{} 0$ while $||\phi_{\epsilon}||_{L^{\infty}} \xrightarrow{} \infty$). 
        \item As a direct result of Theorem \ref{main theorem}, if we have additional assumption that both nonlinearities are analytic in $z$, then $F_1 = F_2$ on $\mathbb{R}^3 \times \mathbb{C}^4$.
    \end{enumerate}
\end{remark}

Now we explain the main ingredients and challenges in our paper. The method used in this paper involves perturbative techniques around smooth solutions, adapted from the framework of higher order linearization and microlocal analysis established in \cite{KurLasUhl} by Kurylev, Lassas and Uhlmann. This approach leverages nonlinear wave interactions to recover information about the derivatives of nonlinearity through carefully constructed wave collisions. After \cite{KurLasUhl, KurLasUhl2}, there has been significant progress on inverse problem for nonlinear hyperbolic equations, see for example \cite{UhlZha, HinUhlZha, BalKujLasLii, CheLasOksPat, UhlWan, LasUhlWan, Tzo, KalRun, UhlZha2, KurLasOksUhl, WanZho, CheLasOksPat2, LasUhlWan2, UhlZha3, BarSte, SabSte, FeiOks, HinUhlZha2}, and for an overview of the recent progress in \cite{GunJia, Las}.

Our work presents additional challenges compared to previous works which stem from the following two features of the problems: 1) Dirac equation is a first order PDE system rather than a scalar equation; 2) we do not assume any specific structural condition for the nonlinearity. More specifically, we encounter the following difficulties:

\begin{itemize}
    \item \textbf{Restrictions on initial data:}  Although the Dirac equation remains of real principal type (see Section \ref{section: microlocal analysis} for details), it differs from the scalar case (see for example \cite{KurLasUhl, LasUhlWan2, HinUhlZha, UhlZha, SabUhlWan}) in that its principal part is a non-diagonal matrix that is singular on the characteristic set. As we will show, the principal symbol of the higher order linearization must satisfy a matrix-coefficient transport equation along bicharacteristics, with initial values restricted to specific subspaces of $\mathbb{C}^4$ based on each bicharacteristic. We emphasize that this phenomenon also does not happen to all systems, it arises when the principal symbol of the operator is not the zero matrix on the characteristic set (see Section \ref{section: distorted plane wave and first order linearization}). For instance, this does not happen to system of wave equation, as the principal symbol is simply the principal symbol of wave operator times identity matrix, which vanishes on the characteristic set.
    
    \item \textbf{Non-trivial propagation:} Another problem is caused by the fact that Dirac operator is of degree 1. For comparison we consider a semilinear wave equation $\Box u = f(u)$ with unknown nonlinearity $f$. The terms in higher order linearization satisfy equations of the form
    \begin{equation*}
        (\Box - f'(u))w = g.
    \end{equation*}
    In this context, multiplication by $f'(u)$ is 2 degree lower than $\Box$, which, according to symbol calculus, does not appear in the transport equation satisfied by the principal symbol of $w$. As a result, $w$ has constant principal symbol along bicharacteristics. For the Dirac equation, however, multiplication by $\partial_z F$ is on the subprincipal level because Dirac operator is a degree 1 operator. Consequently $F$ is always involved in the transport equation (see Section \ref{subsection: causal inverse}), leading to non-trivial propagation along bicharacteristic. Moreover, being a system again makes things more complicated as a first order linear system of ODE whose matrix coefficient depends on the variable usually does not have an explicit solution.
    
    \item \textbf{No structure assumption on $F$:} There have been some works on systems of PDE and they also encountered the problem of non-trivial propagation. In \cite{CheLasOksPat} and \cite{CheLasOksPat2}, the problem was reduced to collecting information about the non-Abelian broken ray transform, followed by inversion of a novel non-Abelian broken light ray transform. However, this approach relies on the specific structure of nonlinearity. For general smooth nonlinearity $F$, when performing 3 wave interaction and let the three incoming rays converge to the same one, the information at the collision point will be of the form $\partial^3_zF(x, u, Bv, Bv, Bv)$ for some parallel transport operator $B$ (see \cite{CheLasOksPat}) and vector $v$. In general it is hard to convert it into usable information, which is of the form $BG$ where $G$ doesn't depend on nonlinearity. It is also worth mentioning that another direct consequence of not having structural assumption is that perturbation around zero solution does not recover information of the nonlinearity away from 0, and perturbation around non-trivial solutions further complicates the aforementioned transport equation.
\end{itemize}

The main strategy for overcoming these challenges is to construct a sequence of collisions that approaches the initial time $t = 0$. Let us outline the general idea below. We fix a bicharacteristic from time 0 to time $T$, and again use higher order linearization and microlocal analysis to observe how information propagates along this bicharacteristic line. For suitable initial value at time $0$, its final value at time $T$ would be determined by the solution map. If, on the other hand, the starting position of the information is at a point on the bicharacteristic that is very close to the actual beginning point at $t = 0$, we can expect its outcome at time $T$ to be very close to the outcome that would result from initiating the information at the actual starting point. This observation suggests the main idea of the proof: we perform 3-wave interactions multiple times along a fixed bicharacteristic, allowing the collision point to approach the boundary at $t = 0$. The limiting case would correspond to the propagation of collision data from $t = 0$ to $t = T$. With some linear algebra the third derivative of $F$ at the boundary can be fully determined.

\vspace{.2in}

The organization of the paper is as follows. In Section \ref{section: asymptotic analysis} we perform asymptotic analysis to compute the equations satisfied by each linearization term. In Section \ref{section: microlocal analysis} we present some results about distribution multiplication and propagation of singularity for future use. In Section \ref{section: distorted plane wave and first order linearization} we define and prove the bijectivity of initial data to final data map. In Section \ref{section: third order linearization}, we perform a 3-wave interaction and compute the equation satisfied by the principal symbol of the third order linearization term. In Section \ref{section: limit of collisions to boundary} we obtain a one parameter family of collisions and compute the limiting case. In Section \ref{section: proof of main theorem} we combine the previous results and finish the proof of Theorem \ref{main theorem} with some linear algebra arguments.

%%%%%%%%%%%%%%%%%%%%%%%%%%%%%%%%%%%%%%%%%%%%%%%%%%%%%%%%%%%%%%%%%%%%%%%
\section*{Acknowledgement}

The author would like to thank Gunther Uhlmann for suggesting this problem and for many helpful discussions. The author would also like to thank Gunther Uhlmann and Ioan Bejenaru for all the helpful discussions and suggestions.

%%%%%%%%%%%%%%%%%%%%%%%%%%%%%%%%%%%%%%%%%%%%%%%%%%%%%%%%%%%%%%%%%%%%%%%
\section{Asymptotic analysis}\label{section: asymptotic analysis}
To simplify the notation, starting now we will perform the computations for $F(z)$ instead of $F(x, z)$, thus all derivatives below are taken with respect to $z$. As one will see the computations up to the end of Section \ref{section: limit of collisions to boundary} are exactly the same when $F$ also depends on $x$. We will come back to $F(x, z)$ in Section \ref{section: proof of main theorem} when we prove Theorem \ref{main theorem}.

We now introduce perturbations around small smooth initial data. Consider initial data of the form $\phi_{\epsilon} = \phi + \sum_1^3 \epsilon_j\phi_j$ where $\phi \in H^s_{\delta}$ is smooth. Let $u, \ue$ denote the solution to \eqref{nonlinear Dirac equation} with initial data $\phi, \phi_{\epsilon}$, respectively. By Theorem \ref{local existence theorem}, $u$ is smooth in $x$ for $t \in [0, T]$ and continuous in $t$. Since $u$ satisfies \eqref{nonlinear Dirac equation}, a standard bootstrap argument shows that $u$ is smooth in both $t$ and $x$.

Let $w_j$ be the solution of the following linear Dirac equation
\begin{equation*}
    \begin{cases}
        i \partial_t w_j + i\alpha \cdot \nabla w_j - \beta w_j - F'(u)w_j = 0 \text{ on } (0, T) \times \mathbb{R}^3,\\
        w_j(0) = \phi_j.
    \end{cases}
\end{equation*}
If $w$ solves the following non-homogeneous linear Dirac equation
\begin{equation}\label{modified linear Dirac equation}
    \begin{cases}
        i \partial_t w + i\alpha \cdot \nabla w - \beta w - F'(u)w = f \text{ on } (0, T) \times \mathbb{R}^3,\\
        w(0) = 0,
    \end{cases}
\end{equation}
we denote $Q(f) = w$, note that $Q$ would depend on $u$. Write the Taylor expansion of $F$ around $u$ as
\begin{align*}
    F(\ue) &= F(u) + F'(u)(\ue-u) + F^{(2)}(u, \ue-u, \ue-u)\\
    &+F^{(3)}(u, \ue-u, \ue-u, \ue-u) + O(|\ue-u|^4)
\end{align*}
where
\begin{equation*}
    F^{(k)}(u, v_1, ...,v_k) = \sum_{|\gamma|=k} \frac{1}{\gamma!} (\partial^{\gamma}F)(u)v_1^{\gamma_1}...v_k^{\gamma_k}.
\end{equation*}
Then $(\ue - u - \sum \epsilon_j w_j)(0) = 0$ and
\begin{align*}
    &(i \partial_t + i\alpha \cdot \nabla - \beta - F'(u))(\ue - u - \sum \epsilon_j w_j)\\
    &= F(\ue)-F(u)-F'(u)(\ue - u) \\
    &= F^{(2)}(u, \ue - u, \ue - u) + F^{(3)}(u, \ue - u, \ue - u, \ue - u) + O(|\ue - u|^4).
\end{align*}
That is,
\begin{equation*}
    \begin{split}
        \ue - u &= \sum \epsilon_j w_j + Q\left( F^{(2)}(u, \ue - u, \ue - u) \right. \\
        &\left. + F^{(3)}(u, \ue - u, \ue - u, \ue - u) + O(|\ue - u|^4) \right) .
    \end{split}
\end{equation*}
By repeatedly substituting $\ue - u$ with the right hand side and use the linearity of $Q$ and symmetry of $F^{(2)}, F^{(3)}$, we have 
\begin{equation}\label{eq: expansion of u_epsilon}
    \ue = u + \sum \epsilon_j w_j + \sum_{i \leq j} \epsilon_i\epsilon_jw_{ij} + \sum_{i \leq j \leq k} \epsilon_i\epsilon_j\epsilon_kw_{ijk} + O(|\epsilon|^4)
\end{equation}
where for $i < j < k$,
\begin{align}\label{eq: linearization terms of u_epsilon}
    w_{ij} = &Q(2F^{(2)}(u, w_i, w_j)),\nonumber\\
    w_{ii} = &Q(F^{(2)}(u, w_i, w_i)),\nonumber\\
    \begin{split}
        w_{ijk} = &Q\left(6F^{(3)}(u, w_i, w_j, w_k) + 2F^{(2)}(u, w_{ij}, w_k)\right. \\
        &\left. + 2F^{(2)}(u, w_{ik}, w_j) + 2F^{(2)}(u, w_{jk}, w_i)\right),
    \end{split}\nonumber\\
    \begin{split}
        w_{iij} = &Q\left( 3F^{(3)}(u, w_i, w_i, w_j) + 2F^{(2)}(u, w_{ii}, w_j) \right. \\
        &\left. + 2F^{(2)}(u, w_{ij}, w_i)\right),
    \end{split}\nonumber\\
    w_{iii} =& Q\left(F^{(3)}(u, w_i, w_i, w_i) + 2F^{(2)}(u, w_{ii}, w_i)\right).
\end{align}

The next perturbation result from \cite{Rau}, again adapted to current scenario, shows that the above terms exist and the remaining term in \eqref{eq: expansion of u_epsilon} is of $|\epsilon|^4$ order in $C([0, T]; H^s(\mathbb{R}^3))$.

\begin{theorem}\label{thm: smooth depandence on initial data}
    (\cite{Rau} Theorem 6.5.2) If $u \in C([0, T]; H^s(\mathbb{R}^3))$ is a solution of \eqref{nonlinear Dirac equation}, then the map $\psi \mapsto w$ from initial data to solution for \eqref{nonlinear Dirac equation} is smooth from a neighborhood of $u(0)$ in $H^s(\mathbb{R}^3)$ to $C([0, T]; H^s(\mathbb{R}^3))$, and the first, second and third derivatives of $\phi_{\epsilon} \mapsto u_{\epsilon}$ with respect to $\epsilon= (\epsilon_1, \epsilon_2, \epsilon_3)$ at $\epsilon = 0$ are given by $w_i$, $w_{ij}$ and $w_{ijk}$ satisfying the above equations. Derivatives of each order are uniformly bounded on the neighborhood.
\end{theorem}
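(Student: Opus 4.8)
The plan is to obtain this statement as a routine application of the implicit function theorem in Banach spaces, the explicit form of the first three differentials then being forced by the formal Picard expansion already carried out above. Write $\mathcal{D} = -\alpha\cdot\nabla - i\beta$, so that \eqref{nonlinear Dirac equation} becomes $\partial_t u = \mathcal{D}u - iF(\cdot,u)$. Because the $\alpha_j$ and $\beta$ are Hermitian with the usual anticommutation relations, $i\mathcal{D} = -i\alpha\cdot\nabla + \beta$ is self-adjoint on $L^2(\mathbb{R}^3;\mathbb{C}^4)$, and since its matrix symbol commutes with the scalar Fourier multiplier $\langle D\rangle^s$, the free propagator $e^{t\mathcal{D}}$ is a strongly continuous group of unitary operators on every $H^s(\mathbb{R}^3;\mathbb{C}^4)$. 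By Duhamel's principle, $v\in C([0,T];H^s)$ solves \eqref{nonlinear Dirac equation} with datum $\psi$ if and only if
\begin{equation*}
\Phi(\psi,v)(t) := v(t) - e^{t\mathcal{D}}\psi + i\int_0^t e^{(t-\tau)\mathcal{D}}F\bigl(\cdot,v(\tau)\bigr)\,d\tau = 0 ,
\end{equation*}
so it suffices to check that $\Phi\colon H^s\times C([0,T];H^s)\to C([0,T];H^s)$ is $C^\infty$ and that its partial differential $\partial_v\Phi\bigl(u(0),u\bigr)$ is a linear isomorphism.

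For the smoothness of $\Phi$ the only nontrivial ingredient is the superposition term. Since $s>3/2$, $H^s(\mathbb{R}^3)$ is a Banach algebra continuously embedded in $C_b(\mathbb{R}^3)$, and multiplication by an $H^s$ function is bounded on $H^s$; combining this with the hypothesis that $F$ and all its $z$-derivatives are bounded on $\mathbb{R}^3\times K$ for every compact $K$, together with Moser-type (tame) estimates, one shows that the Nemytskii operator $\psi\mapsto F(\cdot,\psi)$ is $C^\infty$ from $H^s$ to $H^s$ with $k$-th differential at $\psi$ the bounded symmetric $k$-linear map $(h_1,\dots,h_k)\mapsto(\partial_z^kF)(\cdot,\psi)[h_1,\dots,h_k]$. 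Composing with the time integral and the unitary group $e^{(t-\tau)\mathcal{D}}$ preserves smoothness, so $\Phi\in C^\infty$.

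Next I would compute $\partial_v\Phi\bigl(u(0),u\bigr)h = h + Kh$, where $(Kh)(t)=i\int_0^t e^{(t-\tau)\mathcal{D}}F'(\cdot,u(\tau))h(\tau)\,d\tau$. Since $u\in C([0,T];H^s)$, the matrix $F'(\cdot,u(t))$ is bounded in $H^s$ uniformly in $t\in[0,T]$, so $K$ is a bounded Volterra operator on $C([0,T];H^s)$ and $\mathrm{Id}+K$ is invertible (split $[0,T]$ into short subintervals on which $\|K\|<1$ and iterate; equivalently, because $F'(u)$ is a smooth matrix potential with bounds uniform in $t$, the linear Dirac problem \eqref{modified linear Dirac equation} is uniquely solvable by the $L^2$ energy estimate for this symmetric hyperbolic system, upgraded to $H^s$ by commuting with $\langle D\rangle^s$ and absorbing the commutator with a Moser inequality, and its solution operator is exactly $Q$). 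The implicit function theorem then produces a neighborhood $U$ of $u(0)$ in $H^s$ and a unique $C^\infty$ map $U\ni\psi\mapsto v(\psi)\in C([0,T];H^s)$ with $\Phi(\psi,v(\psi))=0$; this is the solution map, each differential $D^k$ of it is a continuous $U$-valued family of bounded $k$-linear maps, and shrinking $U$ makes these uniformly bounded.

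It remains to identify the differentials. Taking $\psi=\phi_\epsilon=\phi+\sum_{j=1}^3\epsilon_j\phi_j$ and differentiating $\partial_t u_\epsilon=\mathcal{D}u_\epsilon-iF(\cdot,u_\epsilon)$ in $\epsilon$ at $\epsilon=0$, the chain rule and the normalization $F^{(k)}(u,v_1,\dots,v_k)=\sum_{|\gamma|=k}\frac{1}{\gamma!}(\partial^\gamma F)(u)v_1^{\gamma_1}\cdots v_k^{\gamma_k}$ reproduce, after accounting for the combinatorial factors at repeated indices, exactly the forcing terms in \eqref{eq: linearization terms of u_epsilon}; the initial data vanish to every order $\ge1$ because $\phi_\epsilon$ is affine in $\epsilon$, so uniqueness for \eqref{modified linear Dirac equation} identifies the first, second and third differentials with $w_i$, $w_{ij}$ and $w_{ijk}$ and recovers \eqref{eq: expansion of u_epsilon}. \textbf{Main obstacle.} I expect the delicate point to be the second step: proving $C^\infty$ (or at least $C^3$) smoothness of the superposition operator on $H^s$ for the merely locally bounded nonlinearity $F$, and, relatedly, the Moser/commutator estimates needed both there and for the $H^s$-invertibility of the linearization. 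Once one records that the Dirac system is symmetric hyperbolic with Hermitian principal part, the remaining arguments are standard.
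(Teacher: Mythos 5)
The paper does not prove this statement; it is quoted verbatim as Theorem~6.5.2 of Rauch's book, which establishes smooth dependence on initial data for semilinear symmetric hyperbolic systems by exactly the kind of Duhamel fixed-point / implicit-function-theorem argument you lay out, together with the Moser--Schauder estimates that make the superposition operator $\psi\mapsto F(\cdot,\psi)$ smooth on $H^s$ for $s>3/2$. Your reconstruction is therefore the right proof and the right approach, and the obstacle you flag (smoothness of the Nemytskii map and the tame/commutator estimates) is precisely where the real work lives.

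Two small points worth tightening. First, you write that ``the initial data vanish to every order $\ge 1$ because $\phi_\epsilon$ is affine in $\epsilon$''; in fact $\partial_{\epsilon_j}\phi_\epsilon = \phi_j \ne 0$, so $w_j$ has initial datum $\phi_j$ and solves the homogeneous linearized equation, while it is only for orders $\ge 2$ that the initial data vanish and the terms are pure $Q$-images as in \eqref{eq: linearization terms of u_epsilon}. Second, ``shrinking $U$ makes these uniformly bounded'' is a bit quick in an infinite-dimensional setting: bounded sets are not precompact, so continuity alone does not give uniform bounds. What does the job is the explicit IFT formula $Dv(\psi) = -\bigl(\partial_v\Phi(\psi,v(\psi))\bigr)^{-1}\partial_\psi\Phi(\psi,v(\psi))$ and its higher-order analogues: one controls $\|(\mathrm{Id}+K_\psi)^{-1}\|$ uniformly on a ball where $\|K_\psi - K_{u(0)}\|$ is small (a Neumann-series perturbation), and the remaining factors are built from $\partial_z^kF(\cdot,v(\psi))$ which are uniformly bounded on a small $H^s$-ball by the Moser estimates. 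Neither issue affects the validity of the argument.
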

By \eqref{eq: expansion of u_epsilon}, the solution map satisfies 
\begin{align*}
    S(\phi_{\epsilon}) &= \ue(T)\\
    &=u(T) + \sum \epsilon_j w_j(T) + \sum_{i \leq j} \epsilon_i\epsilon_jw_{ij}(T)\\
    &+ \sum_{i \leq j \leq k} \epsilon_i\epsilon_j\epsilon_kw_{ijk}(T) + O(|\epsilon|^4).
\end{align*}
By Theorem \ref{thm: smooth depandence on initial data}, the solution map $S: H^s_{\delta} \to H^s(\mathbb{R}^3)$ is smooth on a neighborhood of $\phi$, we denote its Fr\'echet derivatives as
\begin{equation*}
    \partial_{\epsilon^{\gamma}}(S(\phi_{\epsilon}))|_{\epsilon = 0}
\end{equation*}
for multi-index $\gamma$. For example, $\phi_j \mapsto \partial_{\epsilon_j}(S(\phi+\epsilon_j\phi_j))|_{\epsilon_j = 0}$ is a bounded linear operator satisfying
\begin{equation*}
    \lim_{\epsilon_j \to 0}\frac{||S(\phi+\epsilon_j\phi_j) - S(\phi)-\partial_{\epsilon_j}S(\phi + \epsilon_j\phi_j)|_{\epsilon_j=0}||_{H^s(\mathbb{R}^3)}}{\epsilon_j||\phi_j||_{H^s(\mathbb{R}^3)}} = 0.
\end{equation*}
Same for the higher order derivatives. Then by Theorem \ref{thm: smooth depandence on initial data}, we have the following proposition.

\begin{proposition}\label{solution map prop}
    Let the initial data be of the form $\phi_{\epsilon} = \phi + \sum_{j=1}^3 \epsilon_j\phi_j$ for $\phi \in H^s_{\delta}$, $\phi_j \in H^s(\mathbb{R}^3)$. Then $w_j, w_{ij}$ and $w_{ijk}$ defined in \eqref{eq: linearization terms of u_epsilon} are well-defined in $H^s(\mathbb{R}^3)$. The first, second and third derivatives of the solution map give the solution maps of the corresponding linearization terms:
    \begin{align*}
        &\partial_{\epsilon_j}(S(\phi_{\epsilon}))|_{\epsilon=0} = w_j(T),\\
        &\partial_{\epsilon_i,\epsilon_j}(S(\phi_{\epsilon}))|_{\epsilon=0} = w_{ij}(T),\\
        &\partial_{\epsilon_i,\epsilon_j,\epsilon_k}(S(\phi_{\epsilon}))|_{\epsilon=0} = w_{ijk}(T).
    \end{align*}
\end{proposition}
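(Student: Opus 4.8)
The plan is to read Proposition \ref{solution map prop} as essentially a corollary of Theorem \ref{thm: smooth depandence on initial data}, combined with the algebraic structure of the expansion \eqref{eq: expansion of u_epsilon} and the mapping properties of $Q$. First I would fix $\phi \in H^s_{\delta}$ and take $T>0$ as in the definition of the solution map, so that $S$ is defined on all of $H^s_{\delta}$. After possibly shrinking the directions $\phi_j$ (or restricting $|\epsilon|$ to a small ball), the perturbed data $\phi_{\epsilon} = \phi + \sum_{j=1}^3 \epsilon_j \phi_j$ stays in the $H^s$-neighborhood of $u(0) = \phi$ on which Theorem \ref{thm: smooth depandence on initial data} asserts the data-to-solution map $\psi \mapsto u[\psi]$ is smooth into $C([0,T]; H^s(\mathbb{R}^3))$. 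Hence $\epsilon \mapsto \ue = u[\phi_{\epsilon}]$ is a smooth map from a neighborhood of $0$ in $\mathbb{R}^3$ into $C([0,T]; H^s(\mathbb{R}^3))$, and in particular the partial derivatives $\partial_{\epsilon^\gamma} \ue|_{\epsilon=0}$ exist for $|\gamma| \le 3$ as elements of $C([0,T]; H^s(\mathbb{R}^3))$.

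Second, I would identify these derivatives with the $w_\gamma$ from \eqref{eq: linearization terms of u_epsilon}. Differentiating the Duhamel form of \eqref{nonlinear Dirac equation} in $\epsilon$ at $\epsilon = 0$ --- equivalently, matching powers of $\epsilon$ in the formal expansion produced by repeatedly substituting $\ue - u$ into its own right-hand side, as in the derivation of \eqref{eq: expansion of u_epsilon} --- yields at each order precisely the linear inhomogeneous problem \eqref{modified linear Dirac equation} with the source and initial data recorded in \eqref{eq: linearization terms of u_epsilon}. The linear Dirac system \eqref{modified linear Dirac equation} has the bounded smooth coefficient $F'(u)$, hence is well-posed in $C([0,T]; H^s(\mathbb{R}^3))$ by standard energy estimates, so $Q$ maps $C([0,T]; H^s(\mathbb{R}^3))$ into itself; uniqueness for \eqref{modified linear Dirac equation} then forces $\partial_{\epsilon_j} \ue|_{0} = w_j$, $\partial_{\epsilon_i\epsilon_j} \ue|_{0} = w_{ij}$ and $\partial_{\epsilon_i\epsilon_j\epsilon_k} \ue|_{0} = w_{ijk}$. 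This argument simultaneously shows that the $w_\gamma$ are well-defined in $C([0,T]; H^s(\mathbb{R}^3))$: since $s > 3/2$ the space $H^s(\mathbb{R}^3)$ is a Banach algebra, so each factor $F^{(k)}(u, w_{\alpha_1}, \dots, w_{\alpha_k})$ --- a finite linear combination of products of the $w$'s against the bounded matrices $(\partial^\gamma F)(u)$ --- lies in $C([0,T]; H^s(\mathbb{R}^3))$, and then $Q$ returns it to the same space. In particular $w_j, w_{ij}, w_{ijk} \in H^s(\mathbb{R}^3)$ at each time.

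Finally, evaluation at $t = T$, namely $v \mapsto v(T)$, is a bounded linear operator $C([0,T]; H^s(\mathbb{R}^3)) \to H^s(\mathbb{R}^3)$, hence commutes with differentiation in $\epsilon$, giving
\[
\partial_{\epsilon^\gamma}\big(S(\phi_{\epsilon})\big)\big|_{\epsilon=0} = \partial_{\epsilon^\gamma}\big(\ue(T)\big)\big|_{\epsilon=0} = \big(\partial_{\epsilon^\gamma} \ue\big|_{\epsilon=0}\big)(T) = w_\gamma(T)
\]
for $\gamma = (j), (i,j), (i,j,k)$, which is the assertion. The only substantive input is Theorem \ref{thm: smooth depandence on initial data}; I expect the one point deserving care is the justification that the ``repeated substitution'' of $\ue - u$ genuinely computes the Fr\'echet derivatives rather than a merely formal series. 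This is exactly where the uniform $O(|\epsilon|^4)$ remainder control of Theorem \ref{thm: smooth depandence on initial data} and the uniqueness for \eqref{modified linear Dirac equation} are used; it is also worth observing that the expressions in \eqref{eq: linearization terms of u_epsilon} are symmetric and multilinear in the $\phi_j$, so they do not depend on how $\phi_{\epsilon}$ is split, consistent with the symmetry of mixed partials.
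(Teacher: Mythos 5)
Your proposal is correct and follows essentially the same route as the paper: Proposition~\ref{solution map prop} is treated there as a direct corollary of Theorem~\ref{thm: smooth depandence on initial data}, which already asserts both the smoothness of $\epsilon \mapsto u_\epsilon$ into $C([0,T];H^s)$ and the identification of $\partial_{\epsilon^\gamma}u_\epsilon|_{\epsilon=0}$ with $w_\gamma$, so that evaluating at $t=T$ (a bounded linear map commuting with Fr\'echet differentiation) gives the stated formulas. The only difference is that you re-derive the identification of the derivatives with the $w_\gamma$ via Duhamel/formal matching and well-posedness of \eqref{modified linear Dirac equation}; this is harmless and correct, but is already contained in the statement of Theorem~\ref{thm: smooth depandence on initial data}, which is how the paper shortcuts it.
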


%%%%%%%%%%%%%%%%%%%%%%%%%%%%%%%%%%%%%%%%%%%%%%%%%%%%%%%%%%%%%%%%%%%%%%%
\section{Microlocal analysis}\label{section: microlocal analysis}
In this section we mainly study the operator
\begin{equation*}
    P = i\partial_t + i\alpha \cdot \nabla - \beta - F'(u).
\end{equation*}
The definition of real principal type operator for systems of pseudodifferential operators is given in \cite{Den}.
\begin{definition}
    (\cite{Den} Definition 3.1) An $N\times N$ system $P$ of pseudodifferential operators on $X$ with principal symbol $p(x, \xi)$ is of real principal type at $(y, \eta)\in T^*X\backslash 0$ if there exists an $N\times N$ symbol $\Tilde{p}(x, \xi)$ such that
    \begin{equation*}
        \Tilde{p}(x,\xi)p(x,\xi) = q(x,\xi)\cdot Id_N
    \end{equation*}
    in a neighborhood of $(y, \eta)$, where $q(x, \xi)$ is a scalar symbol of real principal type. We say that $P$ is of real principal type in $T^*X$ if it is at every point.
\end{definition}
The matrices satisfies the following relations:
\begin{equation*}
    \alpha_j\alpha_k + \alpha_k\alpha_j = 2 \delta_{jk}I_4,\quad \alpha_j\beta = - \beta\alpha_j, \quad \beta^2 = I_4.
\end{equation*}
Hence $P$ is of real principal type with $\Tilde{p} = \tau - \alpha \cdot \xi$ and $q = \tau^2 - |\xi|^2$. $\Tilde{P}= -i\partial_t + i\alpha \cdot \nabla$ is a quantization of $\Tilde{p}$, and
\begin{equation*}
    \Tilde{P}P = \Box - (-i\partial_t + i\alpha\cdot\nabla) (\beta + F'(u))
\end{equation*}
whose principal part is the wave operator. When we say the Hamiltonian vector field related to $p$ or $P$ we mean the Hamiltonian vector field $H_q$ generated by the scalar symbol $q$.

%%%%%%%%%%%%%%%%%%%%%%%%%%%%%%%%%%%%%%%%%%%%%%%%%%%%%%%%%%%%%%%%%%%%%%%
\begin{subsection}{Notations}\label{subsection: notations}
We first introduce some notations.
\begin{align*}
    \text{Char}(P) &= \{ (t, x, \tau, \xi): \det(-\tau-\alpha\cdot\xi) = 0, \xi \neq 0 \} \\
    &= \{ (t, x, \tau, \xi): (\tau^2 - |\xi|^2)^2 = 0, \xi \neq 0 \}\\
    &= L\mathbb{R}^4 \backslash 0.
\end{align*}
the set of lightlike covectors. For some $T' < 0$, let $y^{T'}_j = (T', x^{T'}_j)$ and $\eta_j$ a light-like covector. $\gamma_{y, \eta}$ is the geodesic passing through $y$ in the direction of $\eta^{\sharp}$. Denote
\begin{align*}
    &\mathcal{V}(y^{T'}_j, \eta_j, \epsilon) = \{\eta \in T^*_{y^{T'}_j}\mathbb{R}^{n+1}: |\eta - \eta_j|<\epsilon, |\eta| = |\eta_j|\},\\
    &K(y^{T'}_j, \eta_j, \epsilon) = \{\gamma_{y^{T'}_j, \eta}(s): \eta \in L^+_{y^{T'}_j}\mathbb{R}^4 \cap \mathcal{V}(y^{T'}_j, \eta_j, \epsilon), s\in (0, \infty)
    \},\\
    &\Sigma(y^{T'}_j, \eta_j, \epsilon) = \{(y^{T'}_j, r\eta^{\flat}) \in T^*\mathbb{R}^{n+1}: \eta \in \mathcal{V}(y^{T'}_j, \eta_j, \epsilon), r \neq 0\}.
\end{align*}
Let $\Lambda(y^{T'}_j, \eta_j, \epsilon)$ be the Lagrangian manifold that is the flowout of lightlike covectors in $\Sigma(y^{T'}_j, \eta_j, \epsilon)$ via $H_q$ in the future direction. We shall use $\mathcal{V}_j, K_j, \Sigma_j, \Lambda_j$ for simplicity, and denote
\begin{equation}\label{bicharacteristic}
    \Gamma_j(s) = (y_j, \eta_j) + 2s(\tau_j, -\xi_j, 0,0) 
\end{equation}
the bicharacteristic where $\Gamma_j(0) = (y_j, \eta_j) = (0, x_j, \tau_j, \xi_j)$, $\Gamma_j(\frac{T'}{2\tau_j}) = (y_j^{T'}, \eta_j)$. The end point is $\Gamma_j(\frac{T}{2\tau_j}) = (y_j^T, \eta_j) = (T, x_j^T, \tau_j, \xi_j)$.

To perform multiple wave interactions, we introduce the following notations
\begin{align*}
    &K_{ij} = K_i \cap K_j, \quad K_{123} = K_1 \cap K_2 \cap K_3,\\
    &\Lambda_{ij} = N^*K_{ij}, \quad \Lambda_{123} = N^*K_{123}.
\end{align*}
If $K_1, K_2, K_3$ intersect transversally, then conormal fiber of $K_{123}$ is spanned by the conormal fiber of each $K_j$ so would be a timelike subspace, hence $K_{123}$ is a spacelike 1 dimensional submanifold. Let $q$ be a point in $K_{123}$ and $(q, \zeta)$ a lightlike direction in $\Lambda_{123}$, then the corresponding bicharacteristic only intersect $\Lambda_{123}$ once. As $\epsilon \mapsto 0$, we see that $K_j$ goes to $\Gamma_j$ and $K_{123}$ converges to a point.
\end{subsection}

%%%%%%%%%%%%%%%%%%%%%%%%%%%%%%%%%%%%%%%%%%%%%%%%%%%%%%%%%%%%%%%%%%%%%%%
\begin{subsection}{Lagrangian Distributions and Intersecting Lagrangians}\label{subsection: lagrangian distributions and intersecting lagrangians}
Given a conic Lagrangian submanifold $\Lambda$ of $T^*\mathbb{R}^4 \backslash 0$, $\mathcal{I}^{\mu}(\Lambda; \mathbb{C}^4)$ denotes all the corresponding Lagrangian distributions associated with $\Lambda$ of order $\mu$ taking value in $\mathbb{C}^4$. The wavefront set of such distribution is referred to the union of the wavefront set of each coordinate, and the wavefront set of any such distribution would be in $\Lambda$. The principal symbol of a Lagrangian distributions is invariantly defined on the cotangent bundle and takes value in $\mathbb{C}^4 \otimes \Omega^{1/2} \otimes \mathcal{L}$ where $\Omega^{1/2}$ is the half density bundle and $\mathcal{L}$ is the Maslov bundle. The principal symbol of $w \in \mathcal{I}^{\mu}(\Lambda; \mathbb{C}^4)$ is usually denoted by $\sigma_{\mu}(w)$, and most of the time we shall omit the $\mu$ when the level of the principal symbol of clear. Furthermore, when determining distorted plane wave, we shall let it be a classical conormal distribution $\mathcal{I}^{\mu}_{cl}(\Lambda; \mathbb{C}^4)$, meaning the principal symbol is homogeneous.

Let $\Lambda_0, \Lambda_1$ be two cleanly intersecting conic Lagrangian submanifolds, then $\mathcal{I}^{p,l}(\Lambda_0, \Lambda_1; \mathbb{C}^4)$ is the set of all paired Lagrangian distributions associated with $(\Lambda_0, \Lambda_1)$ taking value in $\mathbb{C}^4$. If $w \in \mathcal{I}^{p,l}(\Lambda_0, \Lambda_1; \mathbb{C}^4)$, then $WF(w) \in \Lambda_0 \cup \Lambda_1$, and if $A \in \Psi^0(\Lambda_0; M_{4\times 4}(\mathbb{C}))$ pseudodifferential operator of order 0 on $\Lambda_0$ taking value in $4 \times 4$ complex matrices, and $WF(A) \cap \Lambda_1 = \emptyset$, then $Aw \in \mathcal{I}^{p+l}(\Lambda_0; \mathbb{C}^4)$. Similarly if $B \in \Psi^0(\Lambda_1; M_{4\times 4}(\mathbb{C}))$ and $WF(B) \cap \Lambda_0 = \emptyset$, then $Bw \in \mathcal{I}^{p}(\Lambda_1; \mathbb{C}^4)$. The principal symbol of $w$ on $\Lambda_0$ and $\Lambda_1$ can thus be defined, and they satisfy a compatibility condition that depends only on the geometry of the Lagrangian submanifolds.

We shall omit the $\mathbb{C}^4$ in $\mathcal{I}^{\mu}(\Lambda; \mathbb{C}^4)$ and $\mathcal{I}^{p,l}(\Lambda_0, \Lambda_1; \mathbb{C}^4)$ for simplicity. The specific definition of Lagrangian distributions can be found in chapter 25 of \cite{Jac}, as well as \cite{Hor}. And for paired Lagrangian distributions we refer to \cite{MelUhl}, \cite{GreUhl} and \cite{GuiUhl}.

Finally, we also include some distribution multiplication results here for future use.
\begin{lemma}\label{multiplication 1}
    (\cite{LasUhlWan2} Lemma 3.3) Let $u \in \mathcal{I}^{\mu}(\Lambda_1)$, $v \in \mathcal{I}^{\mu'}(\Lambda_2)$. Then we can write $w = uv$ as $w = w_1 + w_2$ where
    \begin{equation*}
        \begin{split}
            &w_1 \in \mathcal{I}^{\mu, \mu'+1}(\Lambda_{12}, \Lambda_1), \quad WF(w_1) \cap \Lambda_2 = \emptyset,\\
            &w_2 \in \mathcal{I}^{\mu', \mu+1}(\Lambda_{12}, \Lambda_2), \quad WF(w_2) \cap \Lambda_1 = \emptyset.
        \end{split}
    \end{equation*}
    Moreover, for any $(q, \zeta) \in \Lambda_{12}\backslash(\Lambda_1\cup\Lambda_2)$, we can write $\zeta = \zeta_1 + \zeta_2$ in a unique way such that $\zeta_i \in N^*_qK_i, i = 1, 2$. Microlocally away from $\Lambda_1 \cup \Lambda_2$, $uv \in \mathcal{I}^{\mu+\mu'+1}(\Lambda_{12})$ and the principal symbol of $uv$ satisfies
    \begin{equation*}
        \sigma_{\Lambda_{12}}(uv)(q, \zeta) = (2\pi)^{-1}\sigma_{\Lambda_1}(u)(q, \zeta_1) \cdot \sigma_{\Lambda_2}(v)(q, \zeta_2).
    \end{equation*}
\end{lemma}

\begin{lemma} \label{multiplication 2}
    (\cite{LasUhlWan2} Lemma 3.6) Assume that $u \in \mathcal{I}^{\mu}(\Lambda_3)$, $v \in \mathcal{I}^{p, l}(\Lambda_{12}, \Lambda_1)$ are compactly supported near $K_{123}$. For $\epsilon > 0$ sufficiently small, we can write $w = uv$ as $w = w_0 + w_1 + w_2$ where
    \begin{equation*}
        \begin{split}
            &w_0 \in \mathcal{I}^{\mu+p+l-1/2}(\Lambda_{123}), \quad w_1 \in \mathcal{D}'(\mathbb{R}^4; \cup_{i=1}^3\Lambda_i(\epsilon)),\\
            &w_2 \in \mathcal{I}^{p, \mu+1}(\Lambda_{13}, \Lambda_1) + \mathcal{I}^{\mu, p+1}(\Lambda_{13}, \Lambda_3) + \mathcal{I}^{p,l}(\Lambda_{12}, \Lambda_1).
        \end{split}
    \end{equation*}
    Here $\Lambda^{(1)}(\epsilon)$ is a conic $\epsilon$-neighborhood of $\cup_{i=1}^3\Lambda_i$. Moreover, for $q \in K_{123}$ and $\zeta \in N^*_qK_{123}\backslash(\cup_{i=1}^3 \Lambda_i)$, we can write $\zeta = \zeta_1 + \zeta_2 + \zeta_3$ uniquely for $\zeta_i \in N^*_qK_i$. The principal symbol of $w_0$ satisfies
    \begin{equation*}
        \sigma_{\Lambda_{123}}(w_0)(q, \zeta) = (2\pi)^{-1}\sigma_{\Lambda_3}(u)(q, \zeta_3) \cdot \sigma_{\Lambda_{12}}(v)(q, \zeta_1+\zeta_2).
    \end{equation*}
\end{lemma}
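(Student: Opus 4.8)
The statement is quoted from \cite{LasUhlWan2} (Lemma 3.6); here is how I would prove it. The plan is to decompose the product $w=uv$ into pieces governed by single Lagrangian submanifolds and to reduce each to the two--factor product formula of Lemma \ref{multiplication 1} (or a codimension--two variant of the same mechanism), bookkeeping orders and principal symbols along the way. The $\mathbb{C}^4$--valued setting adds nothing essential, since every operation below is carried out fibrewise in the target.

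First I would work microlocally near the fibre $N^*_qK_{123}$ over a fixed point $q\in K_{123}$. Transversality of $K_1,K_2,K_3$ gives that $K_{12},K_{13}$ are smooth of codimension $2$ and $K_{123}$ smooth of codimension $3$; in particular $T_qK_{12}+T_qK_3=T_q\mathbb{R}^4$, so $\Lambda_{12}$ and $\Lambda_3$ meet only over the zero section, the conormal fibre splits as $N^*_qK_{123}=N^*_qK_{12}\oplus N^*_qK_3$, and likewise for the pair $(\Lambda_1,\Lambda_3)$. Choosing coordinates with $K_j=\{x_j=0\}$ near $q$, I would represent $u\in\mathcal{I}^{\mu}(\Lambda_3)$ as a classical conormal distribution $\int e^{ix_3\theta}a(x,\theta)\,d\theta$ and $v\in\mathcal{I}^{p,l}(\Lambda_{12},\Lambda_1)$ by the iterated oscillatory integral of the paired--Lagrangian calculus (see \cite{MelUhl,GreUhl,GuiUhl}), whose amplitude is of product type reflecting the two orders $p$ and $l$.

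Next I would split $v=v'+v''$ with a microlocal cutoff: $v'$ microsupported near $\Lambda_1$ and away from $\Lambda_2$, hence $v'\in\mathcal{I}^{p}(\Lambda_1)$, and $v''$ microsupported near $\Lambda_{12}$ and away from $\Lambda_1$, hence $v''\in\mathcal{I}^{p+l}(\Lambda_{12})$; then $uv=uv'+uv''$. For $uv'$ I would apply Lemma \ref{multiplication 1} to the conormal pair $(\Lambda_3,\Lambda_1)$: this produces a term in $\mathcal{I}^{p,\mu+1}(\Lambda_{13},\Lambda_1)$ (microsupported off $\Lambda_3$), a term in $\mathcal{I}^{\mu,p+1}(\Lambda_{13},\Lambda_3)$ (microsupported off $\Lambda_1$) — both absorbed into $w_2$ — and lower--order remainders microsupported in a conic $\epsilon$--neighborhood of $\Lambda_1\cup\Lambda_3$, absorbed into $w_1$. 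For $uv''$ the mechanism of Lemma \ref{multiplication 1} applies again, now to the conormal of the codimension--one $K_3$ times the conormal of the codimension--two $K_{12}$: since the normal directions are independent, the product lives on $\Lambda_{123}=N^*K_{123}$, with leading part $w_0$ and, by stationary phase in the fibre variables,
\[
\sigma_{\Lambda_{123}}(w_0)(q,\zeta)=(2\pi)^{-1}\,\sigma_{\Lambda_3}(u)(q,\zeta_3)\cdot\sigma_{\Lambda_{12}}(v)(q,\zeta_1+\zeta_2),
\]
where $\zeta=\zeta_1+\zeta_2+\zeta_3$ is the unique decomposition with $\zeta_i\in N^*_qK_i$; the order of $w_0$ on $\Lambda_{123}$ comes out to $\mu+p+l-1/2$ once the half--density and Maslov factors and the codimension count are inserted into the oscillatory--integral reduction. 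The part of $uv''$ microsupported near $\Lambda_3$ off $\Lambda_{123}$ goes into $w_1$, and the part microsupported near $\Lambda_{12}$ off $\Lambda_{123}$ retains the paired structure of $v$, contributing the $\mathcal{I}^{p,l}(\Lambda_{12},\Lambda_1)$ summand of $w_2$. Collecting the pieces gives the asserted decomposition $w=w_0+w_1+w_2$.

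The main obstacle I expect is the precise order and symbol bookkeeping in the computation of $w_0$: one must carry the half--density bundle and the Maslov factor through the stationary--phase reduction in the fibre variables and correctly account for the codimensions of $K_{12}$, $K_3$ and $K_{123}$, which is exactly what turns the naive order shift into the $-1/2$ appearing in the statement; and one must verify that none of the cross terms is rougher than the paired--Lagrangian classes claimed for $w_2$, which is where transversality of $K_1,K_2,K_3$ is used to guarantee that the relevant Lagrangians meet cleanly with the stated codimensions. These verifications are carried out in detail in \cite{LasUhlWan2}.
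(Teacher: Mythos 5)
This lemma is quoted verbatim from \cite{LasUhlWan2} Lemma 3.6; the present paper gives no proof of it and simply invokes it, so there is no in-paper argument to compare against. Judged on its own merits your sketch follows the strategy one would expect (and that \cite{LasUhlWan2} actually carries out): decompose $v$ microlocally according to which Lagrangian dominates, reduce each piece to the two-factor multiplication of Lemma~\ref{multiplication 1} or a stationary-phase computation in the fibre variables, and attribute the remainders to $w_1$ and $w_2$. The identification of the $\Lambda_{13}$-terms and of the residual $\mathcal{I}^{p,l}(\Lambda_{12},\Lambda_1)$ piece of $w_2$ is sensible, and you correctly locate where transversality of $K_1,K_2,K_3$ enters (clean intersections, the direct-sum splitting $N^*_qK_{123}=\bigoplus_j N^*_qK_j$).

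Two places are thinner than they should be. First, the split $v=v'+v''$ with $v'$ ``microsupported near $\Lambda_1$ and away from $\Lambda_2$'' and $v''$ ``near $\Lambda_{12}$ and away from $\Lambda_1$'' is not a clean dichotomy: $\Lambda_1$ and $\Lambda_{12}$ intersect along $\partial\Lambda_{12}$, and a pseudodifferential cutoff cannot separate them there; it is precisely the leftover conic $\epsilon$-neighbourhood of $\partial\Lambda_{12}$ that forces the $w_1\in\mathcal{D}'(\mathbb{R}^4;\cup_i\Lambda_i(\epsilon))$ term into the statement, and your write-up should make that mechanism explicit rather than quietly folding ``lower-order remainders'' into $w_1$. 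Second, the order $\mu+p+l-1/2$ and the constant $(2\pi)^{-1}$ are asserted rather than derived; these come from the stationary-phase reduction in the phase variables together with the half-density and Maslov bookkeeping, and this is the only nontrivial computation in the lemma. A naive application of Lemma~\ref{multiplication 1}'s order arithmetic (codim 1 times codim 1 gives a $+1$ shift on the codim-2 conormal) does not immediately produce $-1/2$ for the codim-1 times codim-2 product landing on the codim-3 conormal $\Lambda_{123}$, so one must actually carry out the fibre stationary phase to see where the shift comes from. You flag this as ``the main obstacle,'' which is honest, but a complete proof must perform this computation rather than defer it.
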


\end{subsection}

%%%%%%%%%%%%%%%%%%%%%%%%%%%%%%%%%%%%%%%%%%%%%%%%%%%%%%%%%%%%%%%%%%%%%%%
\begin{subsection}{Causal inverse}\label{subsection: causal inverse}
We first invoke a simplified version of propagation of singularity theorem for systems that will be used later, specifically we left out the polarization part as it will not be used.
\begin{theorem}\label{propagation of singularity}
    (\cite{Den} Theorem 4.2) Let $P$ be an $N \times N$ system of pseudodifferential operators on a manifold $X$ and let $u \in \mathcal{D}'(X, \mathbb{C}^n)$. Assume that $P$ is of real principal type at $(y, \eta) \in \text{Char}(P)$ and that $y \notin WF(Pu)$. Then, over a neighborhood of $(y, \eta)$ in $\text{Char}(P)$, $WF(u)$ is a union of bicharacteristics of $P$.
\end{theorem}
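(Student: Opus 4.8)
The plan is to reduce the statement to the classical scalar propagation of singularities theorem of H\"ormander, exploiting the real principal type hypothesis exactly as it is phrased in the definition. Shrink to a conic neighborhood $U$ of $(y,\eta)$ on which $\tilde p(x,\xi)p(x,\xi) = q(x,\xi)\,\mathrm{Id}_N$ and on which $(y,\eta)\notin WF(Pu)$, and fix a properly supported quantization $\tilde P$ of $\tilde p$ microlocally defined on $U$. Since the principal symbol of $\tilde P P$ is $\tilde p\,p = q\,\mathrm{Id}_N$, we get $\tilde P P = Q\otimes\mathrm{Id}_N + R$ microlocally on $U$, where $Q$ is a scalar pseudodifferential operator with principal symbol $q$ (hence of real principal type, with real $q$ whose Hamilton field $H_q$ is the one fixed in the paper), and $R$ is an $N\times N$ operator of order one less than $Q$. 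As $\tilde P$ is pseudodifferential it does not enlarge the wavefront set, so $v:=\tilde P(Pu)$ is microlocally smooth near $(y,\eta)$, and
\[
(Q\otimes\mathrm{Id}_N + R)\,u \;=\; v,\qquad WF(v)\ \text{empty near}\ (y,\eta).
\]

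The second step is the propagation argument for $Q\otimes\mathrm{Id}_N + R$, whose principal symbol is scalar times identity. I would first microlocally conjugate it by an elliptic order-zero $N\times N$ operator $A$, chosen so that $A(Q\otimes\mathrm{Id}_N + R)A^{-1} = Q\otimes\mathrm{Id}_N$ modulo a smoothing operator near the bicharacteristic of $q$ through $(y,\eta)$: the obstruction to removing $R$ at the top order is the solvability of a linear transport equation for the symbol of $A$ along $H_q$, which holds because it is a matrix linear ODE; one then iterates and Borel-sums over the lower orders. Setting $w=Au$, the transformed system decouples into $N$ scalar equations $Q w_j = h_j$ with each $h_j$ microlocally smooth near $(y,\eta)$ (it is a component of $Av$ plus a smoothing error applied to $w$). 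Applying H\"ormander's scalar theorem to each $Q w_j = h_j$, every $WF(w_j)$ is a union of integral curves of $H_q$ over a neighborhood of $(y,\eta)$; since $A$ and its microlocal parametrix are pseudodifferential, $WF(u)=WF(w)=\bigcup_j WF(w_j)$ near $(y,\eta)$ by the coordinatewise definition of the wavefront set of a $\mathbb{C}^N$-valued distribution. A union of flow-invariant closed conic sets is flow-invariant, so $WF(u)$ is a union of bicharacteristics of $P$ over a neighborhood of $(y,\eta)$, as claimed.

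The main obstacle is step two: turning the purely algebraic factorization $\tilde p\,p = q\,\mathrm{Id}$ into a clean \emph{microlocal} statement in which the subprincipal matrix $R$ provably does not corrupt the bicharacteristic structure of $WF(u)$. Concretely one must check that the conjugation can be carried out uniformly along the entire (compact) bicharacteristic segment inside $U$ and not merely pointwise, that the smoothing remainders genuinely commute with taking $WF$, and that the a priori distributional regularity of $u$ bootstraps correctly when one unwinds the conjugation — this is precisely where the real principal type hypothesis does its work and where Dencker's microlocal normal form is needed. If one instead argues directly by a positive commutator estimate for $Q\otimes\mathrm{Id}_N+R$ (testing against $\Lambda^*\Lambda u$ for a microlocal cutoff $\Lambda$ concentrated near the bicharacteristic), the same obstacle reappears as the need to absorb the $R$-contributions to $\tfrac1i[Q\otimes\mathrm{Id}_N+R,\Lambda^*\Lambda]$ into the error terms after the standard iteration in microlocal Sobolev order. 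It is worth noting that the full statement in \cite{Den} tracks the polarization set and shows it propagates along \emph{polarized} bicharacteristics, the polarization solving a transport equation driven exactly by $R$; projecting to the base and discarding the polarization yields the version stated here, which is all that is used later, so the dependence on $R$ is harmless for our purposes.
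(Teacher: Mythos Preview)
The paper does not prove this statement; it is quoted verbatim as Theorem~4.2 of \cite{Den} and invoked as a black box, so there is no in-paper proof to compare your attempt against.

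For what it is worth, your sketch is the standard route and matches the strategy behind Dencker's original argument: compose on the left with a quantization $\tilde P$ of $\tilde p$ to reduce to a system with scalar principal part $q\cdot\mathrm{Id}_N$, then neutralize the lower-order matrix remainder $R$ either by conjugating it away (solving matrix transport ODEs along $H_q$ and iterating over orders) or by absorbing it into the error terms of a positive-commutator estimate, and finally apply H\"ormander's scalar theorem componentwise. Your identification of the main obstacle---making the removal of $R$ uniform along a bicharacteristic segment rather than merely pointwise---is correct, and your closing remark that discarding Dencker's polarization refinement yields exactly the coarse $WF$ statement used in the paper is on point.
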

% Suppose $u$ is the solution to $Pu = F(u)$ with smooth initial data $u(0) \in C^{\infty}(\mathbb{R}^3)$, then away from $WF(F(u))$, $WF(u)$ is a union of bicharacteristics. 
As for the symbol calculus, we have the following theorem.
\begin{theorem}\label{transport equation for principal symbol}
    (\cite{HanRoh} Theorem 3.1) Let $P$ be an $N \times N$ real principal type system of pseudodifferential operators of order $m$ on a manifold $X$. Assume $\Lambda$ a homogeneous Lagrangian submanifold of $T^*X\backslash0$ such that $\Lambda \in \text{Char}(P)$. If $A \in \mathcal{I}^{\mu}(X, \Lambda; \Omega_X^{1/2} \otimes \mathbb{C}^N)$ and $a \in S^{\mu+n/4}(X, \Lambda; \Omega_X^{1/2} \otimes \mathbb{C}^N)$ is a principal symbol of $A$ such that $pa = 0$, it follows that $B = PA \in \mathcal{I}^{m+\mu-1}(X, \Lambda; \Omega_X^{1/2} \otimes \mathbb{C}^N)$ has principal symbol $b \in S^{m+\mu+n/4-1}(X, \Lambda; \Omega_X^{1/2} \otimes \mathbb{C}^N)$ satisfying
    \begin{equation*}
        (\mathcal{L}_{H_q}+\frac{1}{2}\{\Tilde{p}, p\}+i\Tilde{p}p^s)a = i\Tilde{p}b.
    \end{equation*}
    Here $\mathcal{L}_{H_q}$ is the Lie derivative with respect to the Hamiltonian vector field $H_q$, $p$ and $\Tilde{p}$ are principal symbols of $P$ and $\Tilde{P}$ respectively, and $p^s$ is the subprincipal symbol of $P$.
\end{theorem}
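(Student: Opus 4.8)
The statement is a symbol-calculus identity, and the natural strategy is to deduce it from the corresponding \emph{scalar} transport theorem of Duistermaat--H\"ormander by composing with the parametrix $\Tilde{P}$; this is exactly the device that collapses the matrix structure at the leading order. The point is the relation $\Tilde{p}\,p=q\cdot Id_N$: modulo operators of order one lower, $\Tilde{P}P$ agrees with a \emph{scalar} operator $Q$ of order $\Tilde{m}+m$ (with $\Tilde{m}$ the order of $\Tilde{P}$), whose principal symbol is the real-principal-type function $q$, and --- crucially --- $q$ vanishes on $\Lambda\subset\text{Char}(P)$. In the Dirac normalization $\Tilde{p}=\tau-\alpha\cdot\xi$, $q=\tau^2-|\xi|^2$ and $Q=\Box$, whose subprincipal symbol vanishes; in general one fixes a scalar quantization $Q$ of $q$ and records its lower-order symbol as a correction.

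First I would set $B:=PA$. Since the top-order ($\mu+m$) term of the symbol expansion of $PA$ is $p|_{\Lambda}\cdot a$, which vanishes by hypothesis, the leading term cancels and $B\in\mathcal{I}^{m+\mu-1}(X,\Lambda;\Omega_X^{1/2}\otimes\mathbb{C}^N)$; write $b$ for its principal symbol. Next I would compute $\sigma(\Tilde{P}B)$ in two ways. On the one hand $\Tilde{P}$ acts on $B$ by multiplication of principal symbols at the top order, so $\sigma(\Tilde{P}B)=\Tilde{p}\cdot b$. On the other hand $\Tilde{P}B=\Tilde{P}PA=QA+RA$ with $R:=\Tilde{P}P-Q$ of order $\Tilde{m}+m-1$. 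Because $q|_{\Lambda}=0$, the scalar transport theorem (see \cite{Hor}), applied componentwise to the $\mathbb{C}^N$-valued $A$, gives $\sigma(QA)=\tfrac1i\bigl(\mathcal{L}_{H_q}+i\,q^{\mathrm{sub}}\bigr)a$, which in the chosen normalization ($Q=\Box$, $q^{\mathrm{sub}}=0$) equals $\tfrac1i\mathcal{L}_{H_q}a$; while $\sigma(RA)=r\cdot a$, where $r$ is the order-$(\Tilde{m}+m-1)$ symbol of $\Tilde{P}P$, which the composition calculus for systems (cf.\ \cite{Den}) computes to be $r=\Tilde{p}\,p^s+\Tilde{p}^{s}p+\tfrac1{2i}\{\Tilde{p},p\}$, with a fixed left-to-right ordering convention in the matrix Poisson bracket. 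Equating the two expressions for $\sigma(\Tilde{P}B)$ gives
\begin{equation*}
\Tilde{p}\cdot b=\tfrac1i\mathcal{L}_{H_q}a+\Bigl(\Tilde{p}\,p^s+\Tilde{p}^{s}p+\tfrac1{2i}\{\Tilde{p},p\}\Bigr)a.
\end{equation*}

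The rest is bookkeeping: the middle term equals $\Tilde{p}^{s}p\,a=\Tilde{p}^{s}(pa)=0$ by hypothesis and drops out, so multiplying through by $i$ and rearranging yields exactly
\begin{equation*}
\bigl(\mathcal{L}_{H_q}+\tfrac12\{\Tilde{p},p\}+i\Tilde{p}\,p^s\bigr)a=i\Tilde{p}\,b ,
\end{equation*}
once the signs and factors of $i$ have been fixed consistently by the normalizations of $q$, $\Tilde{P}$, and the scalar transport formula. The half-density and Maslov factors need no separate treatment: $\Tilde{P}$ and $P$ do not move $\Lambda$, so at the symbol level they act by (matrix) multiplication, and $\mathcal{L}_{H_q}$ is the usual Lie derivative of half-densities over $\Lambda$, exactly as in the scalar theory.

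The step I expect to be the real obstacle is the non-commutative composition calculus at the subprincipal level: one must verify that the order-$(\Tilde{m}+m-1)$ symbol of $\Tilde{P}P$ has precisely the form $\Tilde{p}\,p^s+\Tilde{p}^{s}p+\tfrac1{2i}\{\Tilde{p},p\}$ with the matrix factors in the correct order, and that any subprincipal contribution of the auxiliary scalar $Q$ is zero or can be absorbed --- the sign- and $i$-bookkeeping, while routine, is where the formula can go wrong. A secondary, more cosmetic point: $\sigma(\Tilde{P}B)=\Tilde{p}\,b$ is the \emph{leading} symbol of $\Tilde{P}B$ only when $\Tilde{p}\,b\neq0$; since the asserted formula is an identity of symbols determined order-by-order by the exact composition and transport formulas, the degenerate case follows by continuity, or else one replaces $\Tilde{P}$ by a genuine microlocal parametrix of $P$ along $\text{Char}(P)$ and runs the same argument.
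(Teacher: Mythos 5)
The paper does not prove this theorem; it is imported verbatim from the cited reference (Theorem~3.1 of \cite{HanRoh}), so there is no internal proof to compare against. That said, your sketch is correct and, as far as I can tell, reproduces the natural argument. The reduction via $\Tilde{P}$ using $\Tilde{p}p = q\cdot Id_N$, the scalar Duistermaat--H\"ormander transport formula applied to $QA$, and the subprincipal-level composition identity
\begin{equation*}
\sigma^{s}(\Tilde{P}P) = \Tilde{p}\,p^{s} + \Tilde{p}^{s}\,p + \frac{1}{2i}\{\Tilde{p},p\}
\end{equation*}
(with the fixed left-to-right matrix ordering inside the bracket) are all correct. The arbitrary choice of scalar quantization $Q$ of $q$ drops out exactly as you expect: the $q^{s}a$ term from the scalar transport formula cancels the $-q^{s}\cdot Id_N$ contribution to the order-$(\Tilde{m}+m-1)$ principal symbol of $R=\Tilde{P}P-Q$, so the final formula is independent of this normalization. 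The hypothesis $pa=0$ then annihilates $\Tilde{p}^{s}p\,a=\Tilde{p}^{s}(pa)$, and multiplying through by $i$ yields the stated identity.

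One minor correction: your closing caveat about the ``degenerate case'' $\Tilde{p}b=0$ is unnecessary. The equality $\sigma(\Tilde{P}B)=\Tilde{p}|_{\Lambda}\cdot b$ is an identity of order-$(\Tilde{m}+m+\mu-1)$ symbols in $S^{\bullet}(\Lambda;\Omega^{1/2}\otimes\mathbb{C}^{N})$ and remains valid (both sides vanishing) whenever $\Tilde{p}b\equiv 0$; no continuity argument or replacement of $\Tilde{P}$ by a full microlocal parametrix is needed. The only genuine technical step is the one you flagged: verifying the order-$(\Tilde{m}+m-1)$ symbol of the matrix composition $\Tilde{P}P$. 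This is a short direct computation from the Kohn--Nirenberg left-symbol expansion for products of systems, with the noncommutativity showing up precisely in the ordering of the matrix factors in $\{\Tilde{p},p\}$ and in the fact that $\Tilde{p}^{s}p$ and $p\Tilde{p}^{s}$ are a priori different (only the former appears, and it is killed by $pa=0$).
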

In particular, since in our case $P$ is independent of $y$, the transport equation is
\begin{equation*}
    [\mathcal{L}_{H_q} - i(\tau-\alpha\cdot\xi)(\beta + F'(u))]a = i(\tau-\alpha\cdot\xi)b
\end{equation*}
where $q = \tau^2 - |\xi|^2$ and $H_q$ is the Hamiltonian vector field of the standard wave operator
\begin{equation*}
    H_q = 2\tau\frac{\partial}{\partial t} - 2\xi \cdot \frac{\partial}{\partial x}.
\end{equation*}

Now we compute the causal inverse of $P$, recall that $Q(f)$ is the solution to \eqref{modified linear Dirac equation}.

\begin{proposition}\label{causal inverse 1}
If $f \in \mathcal{I}^{\mu}(\Lambda)$, and $w = Q(f)$, then $w \in \mathcal{I}^{\mu-1/2,-1/2}(\Lambda, \Lambda^g)$ where $\Lambda^g$ is the future flowout of $\Lambda \cap L^+\mathbb{R}^4$. On $\Lambda \backslash \Lambda^g$,
\begin{equation*}
    \sigma_{\Lambda}(w) = p^{-1}\sigma(f).
\end{equation*}
On $\Lambda^g$ the principal symbol satisfies
\begin{equation*}
    \begin{cases}
        (\mathcal{L}_{H_q}+i\Tilde{p}p^s)\sigma_{\Lambda^g}(w) = 0 \text{ on } \Lambda^g \backslash \partial \Lambda^g \\
        \sigma_{\Lambda^g}(w) = C\Tilde{p}\sigma(f) \text{ on } \partial\Lambda^g
    \end{cases}
\end{equation*}
for some constant $C$ that only depends on the geometry of $\Lambda$ and $\Lambda^g$.
\end{proposition}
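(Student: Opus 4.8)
The plan is to reduce everything to the scalar wave operator. Since
\[
\tilde P P = \Box - (-i\partial_t + i\alpha\cdot\nabla)(\beta + F'(u))
\]
has scalar principal part $\Box\, I_4$ and is of real principal type with Hamiltonian $q = \tau^2 - |\xi|^2$, it possesses a causal (forward) fundamental solution $E_+$, and by the classical structure theory of the wave parametrix (see \cite{MelUhl, GreUhl}, or the analogous statements used in \cite{LasUhlWan2}) the operator $E_+$ maps $\mathcal I^{\nu}(\Lambda)$ into $\mathcal I^{\nu - 3/2,\,-1/2}(\Lambda, \Lambda^g)$, where $\Lambda^g$ is the future $H_q$-flowout of $\Lambda \cap L^+\mathbb R^4$; the lower-order terms do not affect this Lagrangian structure. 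Applying the first-order differential operator $\tilde P$ to $Pw = f$ gives $\tilde P P w = \tilde P f$, and $w = Q(f)$ is exactly the forward solution of this equation; since $\tilde P f \in \mathcal I^{\mu+1}(\Lambda)$, this yields $w = E_+(\tilde P f) \in \mathcal I^{(\mu+1) - 3/2,\,-1/2}(\Lambda,\Lambda^g) = \mathcal I^{\mu - 1/2,\,-1/2}(\Lambda,\Lambda^g)$. That the flowout is taken in the future is forced by causality; equivalently it follows from Dencker's propagation of singularities, Theorem \ref{propagation of singularity}, applied to $Pw = f$ together with $w \equiv 0$ for $t < 0$.

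Next I compute the symbol on $\Lambda\setminus\Lambda^g$. Since the flowout $\Lambda^g$ contains its own source $\Lambda \cap L^+\mathbb R^4 = \Lambda\cap\mathrm{Char}(P)$, on $\Lambda\setminus\Lambda^g$ the symbol $p = -\tau-\alpha\cdot\xi$ is invertible, so $P$ is microlocally elliptic there; microlocally $w = P^{-1}f$ with $P^{-1}$ of order $-1$ and principal symbol $p^{-1}$, whence $\sigma_\Lambda(w) = p^{-1}\sigma(f)$ on $\Lambda\setminus\Lambda^g$.

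It remains to analyze the symbol on $\Lambda^g$. On the interior $\Lambda^g\setminus\partial\Lambda^g$ we are microlocally away from $WF(f)\subset\Lambda$, so $Pw \equiv 0$ there; since $\Lambda^g \subset \mathrm{Char}(P)$ is the $H_q$-flowout, Theorem \ref{transport equation for principal symbol} applies with $b = 0$. Because $\tilde p = \tau-\alpha\cdot\xi$ and $p$ are both independent of $(t,x)$, their Poisson bracket $\{\tilde p, p\}$ vanishes, so the general transport equation collapses to $\big(\mathcal L_{H_q} + i\,\tilde p\, p^s\big)\sigma_{\Lambda^g}(w) = 0$, that is $\big[\mathcal L_{H_q} - i(\tau-\alpha\cdot\xi)(\beta + F'(u))\big]\sigma_{\Lambda^g}(w) = 0$, which is the stated interior equation. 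For the initial condition on $\partial\Lambda^g = \Lambda\cap\Lambda^g$ one cannot simply invert $p$, since it degenerates on $\mathrm{Char}(P)$; instead, the value of $\sigma_{\Lambda^g}(w)$ at $\partial\Lambda^g$ is dictated by the paired-Lagrangian compatibility condition relating the two symbols of $w$. Tracing this through $w = E_+(\tilde P f)$: the symbol of $E_+$ on the wave flowout, evaluated at its boundary, equals a universal constant times the symbol of its input, and $\tilde P$ multiplies the input symbol by $\tilde p = \tau - \alpha\cdot\xi$; hence $\sigma_{\Lambda^g}(w)|_{\partial\Lambda^g} = C\,\tilde p\,\sigma(f)$, with $C$ absorbing the wave-parametrix constant and the clean-intersection geometry of $(\Lambda, \Lambda^g)$.

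The main obstacle is this last matching step: because $p$ degenerates precisely on $\mathrm{Char}(P) \supset \partial\Lambda^g$, the formula $\sigma_{\Lambda^g}(w) = p^{-1}\sigma(f)$ valid in the elliptic region blows up as one approaches the characteristic set, and one must instead extract the finite limit using the identity $\tilde p\, p = q\, I_4$ together with the simple vanishing of $q$ on $\mathrm{Char}(P)$ and the transversality of $H_q$ to $\mathrm{Char}(P)$ there. My strategy is to carry out the bookkeeping in the scalar wave case first, where both the constant $C$ and the symbol-transfer rule are classical, and then to observe that the replacement $f \mapsto \tilde P f$ inserts only the matrix factor $\tilde p$ on the right-hand side while leaving the scalar flowout geometry untouched, so the $4\times 4$ matrix structure enters solely through this $\tilde p$ and through the zeroth-order coefficient $F'(u)$ in the interior transport equation.
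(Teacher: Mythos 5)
Your proposal matches the paper's own argument in all essentials: both pass to the second-order operator $\tilde P P$, invoke the causal inverse $Q'\in\mathcal I^{-3/2,-1/2}(N^*\mathrm{Diag},\Lambda_g)$ of a scalar-principal-part wave operator to get $w=Q'(\tilde P f)\in\mathcal I^{\mu-1/2,-1/2}(\Lambda,\Lambda^g)$, invert $p$ microlocally off $\partial\Lambda^g$, apply Theorem~\ref{transport equation for principal symbol} with $b=0$ on the interior of $\Lambda^g$, and extract the initial data at $\partial\Lambda^g$ from the identity $p^{-1}\sigma(f)=q^{-1}\tilde p\,\sigma(f)$ together with the paired-Lagrangian compatibility/symbol-transfer rule of Melrose--Uhlmann. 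The only cosmetic differences are notational ($E_+$ vs.\ $Q'$) and your explicit remark that $\{\tilde p,p\}=0$ by translation invariance, which the paper leaves implicit.
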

\begin{proof}
    Note that if $w = Q(f)$, then $\Box w - \Tilde{P}(\beta+F'(u))w = \Tilde{P}f$ and $w_t = f -\alpha\cdot\nabla w - i (\beta+F'(u))w = 0$ on $(L^+\text{supp}(f))^c$. By \cite{BarGinPfa} and \cite{MelUhl}, $\Box w - \Tilde{P}(\beta+F'(u))$ has a causal inverse $Q' \in \mathcal{I}^{-3/2, -1/2}(N^*\text{Diag}, \Lambda_g)$ where
    \begin{equation*}
        N^*\text{Diag} = \{(z, \zeta, z', \zeta') \in T^*\mathbb{R}^8\backslash 0: z = z', \zeta = \zeta'\}
    \end{equation*}
    and $\Lambda_g$ is the Lagrangian obtained by flowing out $N^*\text{Diag} \cap (L\mathbb{R}^4 \times T^*\mathbb{R}^4)$ under $H_q$ lifted from the left factor (see section 3.1 of \cite{LasUhlWan}). Thus $Q': \mathcal{I}^{\mu'}(\Lambda) \xrightarrow{} \mathcal{I}^{\mu'-3/2,-1/2}(\Lambda, \Lambda^g)$. Because $w = Q(f) = Q'(\Tilde{P}f)$, plug in $\mu' = \mu+1$ we have $w \in \mathcal{I}^{\mu-1/2,-1/2}(\Lambda, \Lambda^g)$.

    To compute the principal symbol, notice that $p$ is invertible on $\Lambda \backslash \partial \Lambda^g$, so on the principal level $p\sigma_{\Lambda}(w) = \sigma(f)$ implies the first equation. The transport equation in the second equation is by Theorem \ref{transport equation for principal symbol}. To compute the initial data, notice that on $\Lambda \backslash \partial\Lambda^g$,
    \begin{equation*}
        p^{-1}\sigma(f) = q^{-1}\Tilde{p}\sigma(f),
    \end{equation*}
    and as $q = 0$ parametrizes the light cone, by compatibility condition $\sigma_{\Lambda^g}(w) = C\Tilde{p}\sigma(f)$ with some nonzero $C$ that only depends on the geometry of the intersecting submanifolds. We can also obtain the initial value using the fact that $w = Q'(\Tilde{p}f)$ and computations from proof of Proposition 6.6 in \cite{MelUhl}.
\end{proof}

\begin{proposition}\label{causal inverse 2}
If $f \in \mathcal{I}^{p, l}(\Lambda_0, \Lambda_1)$, and $w = Q(f)$, then $w \in \mathcal{I}^{p,l-1}(\Lambda_0, \Lambda_1)$. On $\Lambda_0 \backslash \Lambda_1$,
\begin{equation*}
    \sigma_{\Lambda_0}(w) = p^{-1}\sigma_{\Lambda_0}(f).
\end{equation*}
\end{proposition}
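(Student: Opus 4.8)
The plan is to rerun the argument of Proposition \ref{causal inverse 1}, now in the paired Lagrangian calculus. First I would use the factorization $w = Q(f) = Q'(\Tilde{P}f)$, where $Q' \in \mathcal{I}^{-3/2,-1/2}(N^*\text{Diag},\Lambda_g)$ is the causal inverse of $\Box - \Tilde{P}(\beta+F'(u))$ produced in the proof of Proposition \ref{causal inverse 1} from \cite{BarGinPfa, MelUhl}, with $\Lambda_g$ the flowout of $N^*\text{Diag}\cap(L\mathbb{R}^4\times T^*\mathbb{R}^4)$ under $H_q$ lifted from the left factor. Since $\Tilde{P}=-i\partial_t+i\alpha\cdot\nabla$ is a first order differential operator, it raises the order by one without enlarging the wavefront set, so $\Tilde{P}f\in\mathcal{I}^{p+1,l}(\Lambda_0,\Lambda_1)$.

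The heart of the argument is the composition $w=Q'(\Tilde{P}f)$, for which I would invoke the composition calculus for paired Lagrangian distributions of Melrose--Uhlmann \cite{MelUhl} and Greenleaf--Uhlmann \cite{GreUhl}, in the bookkeeping form used in \cite{LasUhlWan, LasUhlWan2}. The geometric facts one needs are that, as canonical relations, $N^*\text{Diag}$ fixes both $\Lambda_0$ and $\Lambda_1$, while $\Lambda_g$ is trivial on the non-characteristic part of $\Lambda_0$ and carries $\Lambda_0\cap L^+\mathbb{R}^4$ into $\Lambda_1$; this is exactly the compatibility of $\Lambda_1$ with the future flowout of $\Lambda_0$ that is present in the situations where the proposition is used, and it is also forced a priori by applying the propagation of singularities theorem (Theorem \ref{propagation of singularity}) to $Pw=f$, which gives $WF(w)\subset\Lambda_0\cup\Lambda_1$. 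Hence $(N^*\text{Diag}\cup\Lambda_g)\circ(\Lambda_0\cup\Lambda_1)\subset\Lambda_0\cup\Lambda_1$, so $w$ again lies in the pair $(\Lambda_0,\Lambda_1)$, and tracking the orders through the composition, exactly as in the scalar computation that produced the $-1/2$'s in Proposition \ref{causal inverse 1}, gives $w\in\mathcal{I}^{p,l-1}(\Lambda_0,\Lambda_1)$.

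For the principal symbol on $\Lambda_0\setminus\Lambda_1$ I would avoid the composition and argue directly from $Pw=f$ with vanishing Cauchy data. Away from the characteristic set $\text{Char}(P)=L\mathbb{R}^4$ — and the characteristic points of $\Lambda_0$ lie in the initial part of the flowout, exactly as in the $\Lambda\setminus\partial\Lambda^g$ discussion in Proposition \ref{causal inverse 1} — the symbol $p$ is invertible, so equating principal symbols in $Pw=f$ on $\Lambda_0\setminus\Lambda_1$ gives $p\,\sigma_{\Lambda_0}(w)=\sigma_{\Lambda_0}(f)$, that is $\sigma_{\Lambda_0}(w)=p^{-1}\sigma_{\Lambda_0}(f)$; this is consistent with the $\Lambda_0$-order dropping from $p+l$ to $p+l-1$.

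I expect the main obstacle to be the composition step: one must check that the clean-intersection and transversality hypotheses required by the paired Lagrangian composition theorem genuinely hold in this geometry — clean intersection of $\Lambda_g$ with $\Lambda_0\times\text{Diag}$ and with the lift of $\Lambda_1$, and non-degeneracy of the composed relation away from $\Lambda_0\cap\Lambda_1$ — and carry out the order arithmetic with the correct half-density and Maslov normalizations. If one wishes to avoid the general composition theorem, an alternative is to split $Q'=Q'_{\mathrm{d}}+Q'_{\mathrm{fl}}$ with $Q'_{\mathrm{d}}$ a pseudodifferential operator microlocalized near $N^*\text{Diag}$ and $Q'_{\mathrm{fl}}\in\mathcal{I}^{-1/2}(\Lambda_g\setminus N^*\text{Diag})$, apply $Q'_{\mathrm{d}}(\Tilde{P}f)$ via the mapping properties of pseudodifferential operators on paired Lagrangians and $Q'_{\mathrm{fl}}(\Tilde{P}f)$ via propagation along bicharacteristics as in Proposition \ref{causal inverse 1}; but one still has to control the transition region near $\Lambda_0\cap\Lambda_1$ where this splitting fails to be clean, which reintroduces essentially the same difficulty.
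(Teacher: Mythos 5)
Your plan follows the same route as the paper: factor $w = Q(f) = Q'(\Tilde{P}f)$, note $\Tilde{P}f\in\mathcal{I}^{p+1,l}(\Lambda_0,\Lambda_1)$, push this through $Q'\in\mathcal{I}^{-3/2,-1/2}(N^*\text{Diag},\Lambda_g)$, and read off the $\Lambda_0$-symbol from the invertibility of $p$ off the characteristic set. The one economy the paper makes at the step you correctly flagged as the hard part is that, rather than re-running the Melrose--Uhlmann/Greenleaf--Uhlmann composition calculus and verifying the clean-intersection hypotheses, it simply cites Lemma 3.4 of \cite{LasUhlWan2}, which already packages the mapping property $Q':\mathcal{I}^{p',l'}(\Lambda_0,\Lambda_1)\to\mathcal{I}^{p'-1,l'-1}(\Lambda_0,\Lambda_1)$ under precisely the flowout-compatibility of $\Lambda_1$ with $\Lambda_0$ that you identified as necessary.
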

\begin{proof}
    $Q' \in \mathcal{I}^{-3/2, -1/2}(N^*\text{Diag}, \Lambda_g)$ so by \cite{LasUhlWan2} Lemma 3.4 
    \begin{equation*}
        Q': \mathcal{I}^{p', l'}(\Lambda_0, \Lambda_1) \xrightarrow{} \mathcal{I}^{p'-1, l'-1}(\Lambda_0, \Lambda_1).
    \end{equation*}
    Then $w \in \mathcal{I}^{p,l-1}(\Lambda_0, \Lambda_1)$ follows from $w = Q(f) = Q'(\Tilde{P}f)$. Away from the characteristic set $p$ is invertible on $\Lambda_0 \backslash \Lambda_1$ so the equation follows.
\end{proof}
    
\end{subsection}

%%%%%%%%%%%%%%%%%%%%%%%%%%%%%%%%%%%%%%%%%%%%%%%%%%%%%%%%%%%%%%%%%%%%%%%
\section{Distorted plane wave and first order linearization}\label{section: distorted plane wave and first order linearization}
First consider $P' = i\partial_t + i\alpha\cdot\nabla - \beta$, let $f_j \in \mathcal{I}_{cl}^{\mu+1/2}(\Sigma_j)$ and $\Tilde{w}_j$ the solution of
\begin{equation*}
    \begin{cases}
        P'\Tilde{w}_j = f_j\\
        \Tilde{w}_j = 0 \text{ on } (L^+supp(f_j))^c.
    \end{cases}
\end{equation*}
Note that $P'$ can be thought of as a special case of $P$ where the $F'(u)$ term is 0 everywhere. Then by Proposition \ref{causal inverse 1}, $\Tilde{w}_j \in \mathcal{I}^{\mu,-1/2}(\Sigma_j, \Lambda_j)$. Specifically if we only consider time 0 to time $T$, then $\Tilde{w}_j \in \mathcal{I}^{\mu}(\Lambda_j)$ and the principal symbol satisfies
\begin{equation}\label{distorted wave symbol equation}
    \begin{cases}
        (\mathcal{L}_{H_q} - i(\tau-\alpha\cdot\xi)\beta)\sigma(\Tilde{w}_j) = 0 \text{ on } \Lambda_j \\
        \sigma(\Tilde{w}_j) = C\Tilde{p}\sigma(f_j) \text{ on } \Sigma_j \cap \Lambda_j,
    \end{cases}
\end{equation}
which doesn't contain any nonlinearity so is fully known here. Let $\phi_j(x) = \Tilde{w}_j(0, x)$, by making $\mu$ sufficiently negative, $\phi_j \in H^s(\mathbb{R}^3)$ and hence perturbation result holds. Let $w_j$ solve
\begin{equation*}
    \begin{cases}
        Pw_j = 0\\
        w_j(0) = \phi_j.
    \end{cases}
\end{equation*}
Note that $P(w_j - \Tilde{w}_j) = F'(u)\Tilde{w}_j$, and initial data vanishes at 0. By Theorem \ref{propagation of singularity}, and the fact that $WF(\Tilde{w}_j) \subset \Lambda_j$, we have $WF(w_j - \Tilde{w}_j) \subset \Lambda_j$, hence $WF(w_j) \subset \Lambda_j$.

Let $\mathcal{R}_0$ be the restriction operator at time 0, that is $\mathcal{R}_0(w(t, x)) = w(0, x)$. We know $WF(w_j), WF(\Tilde{w}_j) \in \Lambda_j$, and $\Lambda_j$ does not intersect the conormal bundle of the time slice $\mathbb{R}^3_0 = \{(0, x): x \in \mathbb{R}^3\}$, so $\mathcal{R}_0 \in \mathcal{I}^{1/4}(\mathbb{R}^3_0, \mathbb{R}^4; C_0)$ is a Fourier integral operator of order $1/4$ (see \cite{Dui} Chapter 5.1) where the corresponding canonical relation is given by
\begin{equation*}
    C_0 = \{(x, \xi; 0, x, \tau, \xi)\}.
\end{equation*}
(It is worth mentioning that $\mathcal{R}_0$ can act on $w_j$ as a FIO because we can extend $w_j$ backward in time such that it is a distribution on an some $(-\epsilon, \epsilon) \times \mathbb{R}^3$.) For any $(0, x, \tau, \xi) \in \Lambda_j$,
\begin{align*}
    &\sigma(\phi_j)(x, \xi) = \sigma(\mathcal{R}_0)(x, \xi; 0, x, \tau, \xi)\sigma(w_j)(0, x, \tau, \xi),\\
    &\sigma(\phi_j)(x, \xi) = \sigma(\mathcal{R}_0)(x, \xi; 0, x, \tau, \xi)\sigma(\Tilde{w}_j)(0, x, \tau, \xi).
\end{align*}
Since $\sigma(\mathcal{R}_0)(x, \xi; 0, x, \tau, \xi) \neq 0$ (see \cite{Dui} Chapter 5.1), we have $w_j \in \mathcal{I}^{\mu}(\Lambda_j)$ and $\sigma(w_j)(0, x, \tau, \xi) = \sigma(\Tilde{w}_j)(0, x, \tau, \xi)$.

Using Proposition \ref{causal inverse 1} and the fact that $P'$ is fully known, we can determine what $\sigma(\Tilde{w}_j)(0, x_j, \tau_j, \xi_j)$ is by constructing appropriate $f_j$. Specifically, we rewrite \eqref{distorted wave symbol equation} along $\Gamma_j$ as
\begin{equation}\label{distorted plane wave ode}
    \begin{cases}
        \frac{d}{ds} \sigma(\Tilde{w}_j)(\Gamma_j(s)) = A_0(\Gamma_j(s))\sigma(\Tilde{w}_j)(\Gamma_j(s))\\
        \sigma(\Tilde{w}_j)(\Gamma_j(\frac{T'}{2\tau_j})) = C\Tilde{p}\sigma(f_j)(\Gamma_j(\frac{T'}{2\tau_j}))
    \end{cases}
\end{equation}
an ODE along $\Gamma_j$ where
\begin{equation*}
    A_0(y, \eta) = i(\tau - \alpha \cdot \xi)\beta.
\end{equation*}
Note that here $A_0$ doesn't depend on $y$, hence it stays constant along $\Gamma_j(s)$, $A_0 = A_0(\Gamma_j(s)) = i(\tau_j - \alpha\cdot\xi_j)\beta$. Then we simply have $\sigma(\Tilde{w}_j)(\Gamma_j(s)) = \exp(sA_0) \sigma(\Tilde{w}_j)(\Gamma_j(s))$. However there is some restriction on the initial data in \eqref{distorted plane wave ode}, namely it can only be in the kernel of $\Tilde{p}(\Gamma_j) = -\tau_j - \alpha\cdot\xi_j$. Moreover, we see that the image of $A_0(\Gamma_j(s))$ is precisely in $\text{ker}(-\tau_j - \alpha\cdot\xi_j)$, meaning $\sigma(\Tilde{w}_j)(\Gamma_j(s)) \in \text{ker}(-\tau_j - \alpha\cdot\xi_j)$ for all $s$.

The result of the above analysis is that, we can assign any $v_j \in \text{ker}(-\tau_j - \alpha \cdot \xi_j)$ as the initial data for $\sigma(w_j)$ at $\Gamma_j(0) = (y_j, \eta_j)$, by choosing initial data $f_j$ such that $C\Tilde{p}\sigma(f_j)(\Gamma_j(\frac{T'}{2\tau_j})) = \exp(\frac{T'}{2\tau_j}A_0)v_j$.

According to Theorem \ref{transport equation for principal symbol}, $\sigma(w_j)$ along the bicharacteristic $\Gamma_j$ solves the ODE
\begin{equation}\label{first order linearization ode}
    \begin{cases}
        \frac{d}{ds} \sigma(w_j)(\Gamma_j(s)) = A(\Gamma_j(s))\sigma(w_j)(\Gamma_j(s)) \text{ for } 0<s<\frac{T}{2\tau_j}\\
        \sigma(w_j)(\Gamma_j(0)) = v_j \in \text{ker}(-\tau_j - \alpha\cdot\xi_j)
    \end{cases}
\end{equation}
where
\begin{equation*}
    A(y, \eta) = i(\tau - \alpha \cdot \xi)(\beta+F'(u(y))).
\end{equation*}
Similar argument can also show that the initial data to end data map, $v_j \mapsto \sigma(w_j)(y_j^T, \eta_j)$, is bijective on $\text{ker}(-\tau_j - \alpha \cdot \xi_j)$.

From Proposition \ref{solution map prop}, $w_j(T) = \partial_{\epsilon_j}|_{\epsilon = 0} \mathcal{S}(\phi + \sum \epsilon_j \phi_j)$, again use the fact that $WF(w_j) \subset \Lambda_j$, we have
\begin{equation*}
    \sigma(w_j)(y_j^T, \eta_j) = \sigma(\mathcal{R}_T)(x_j^T, \xi_j; y_j^T, \eta_j)^{-1}\sigma(\partial_{\epsilon_j}|_{\epsilon = 0} \mathcal{S}(\phi + \sum \epsilon_j \phi_j))(x_j^T, \xi_j)
\end{equation*}
is determined by the solution map. Note that all the computations hold when $F$ also depends on $x$. In conclusion, we have the following result.
\begin{lemma}\label{solution map determines initial data to final data}
    For $i= 1, 2$, consider the ODE
    \begin{equation*}
        \begin{cases}
            \frac{d}{ds} w_i = A_i(\Gamma_j(s))w_i \text{ for } 0<s<\frac{T}{2\tau_j}\\
            w_i(0) = v
        \end{cases}
    \end{equation*}
    where
    \begin{equation*}
        A_i(y, \eta) = i(\tau - \alpha \cdot \xi)(\beta + F_i'(x, u(y))).
    \end{equation*}
    Then the initial data to final data map
    \begin{align*}
        W_i: & \text{ker}(-\tau_j - \alpha\cdot\xi_j) \xrightarrow{} \text{ker}(-\tau_j - \alpha\cdot\xi_j),\\
        &v \mapsto w_i(\frac{T}{2\tau_j})
    \end{align*}
    is bijective. Moreover, the same assumption as in Theorem \ref{main theorem} implies $W_1 = W_2$.
\end{lemma}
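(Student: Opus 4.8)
The plan is to split the statement into three assertions and discharge each with the machinery of Section~\ref{section: distorted plane wave and first order linearization}: (i) each $W_i$ maps $\ker(-\tau_j-\alpha\cdot\xi_j)$ into itself (well-definedness); (ii) each $W_i$ is bijective; (iii) the assumption $S_1=S_2$ of Theorem~\ref{main theorem} forces $W_1=W_2$. Only (iii) uses the solution maps; (i) and (ii) are linear-algebraic and hold for each $i$ in isolation.

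For (i)--(ii), put $L:=-\tau_j-\alpha\cdot\xi_j=\tilde p(\Gamma_j(s))$, which is constant along $\Gamma_j$ because the bicharacteristic fixes $(\tau,\xi)$. From $(\alpha\cdot\xi_j)^2=|\xi_j|^2 I_4$ and the lightlike relation $\tau_j^2=|\xi_j|^2$ one gets $L\cdot i(\tau_j-\alpha\cdot\xi_j)=i(|\xi_j|^2-\tau_j^2)I_4=0$, hence $L\,A_i(\Gamma_j(s))\equiv 0$, i.e.\ $\mathrm{Im}\,A_i(\Gamma_j(s))\subset\ker L$ for all $s$. Therefore $\tfrac{d}{ds}(Lw_i)=LA_iw_i=0$, and since $Lw_i(0)=Lv=0$ we conclude $w_i(s)\in\ker L$ for every $s$, which is (i). For (ii): the fundamental matrix $\Phi_i(s)\colon\mathbb{C}^4\to\mathbb{C}^4$ of the linear ODE is invertible and, by the previous step, preserves the finite-dimensional subspace $\ker L$; an injective endomorphism of a finite-dimensional vector space is an automorphism, so $W_i=\Phi_i(\tfrac{T}{2\tau_j})\big|_{\ker L}$ is bijective.

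For (iii) I would follow this chain, writing $w_j^{(i)}$ for the first-order linearization in the direction $\phi_j$ associated with $F_i$. Given $v\in\ker L$, the analysis around \eqref{distorted wave symbol equation}--\eqref{distorted plane wave ode} selects $f_j\in\mathcal{I}_{cl}^{\mu+1/2}(\Sigma_j)$ — built from the free operator $P'$ alone, hence independent of the nonlinearity — so that the datum $\phi_j=\tilde w_j(0,\cdot)$ is likewise independent of the nonlinearity and $\sigma(w_j^{(i)})(\Gamma_j(0))=v$. By Proposition~\ref{solution map prop}, $w_j^{(i)}(T)=\partial_{\epsilon_j}\big|_{\epsilon=0}S_i(\phi+\epsilon_j\phi_j)$, so $S_1=S_2$ gives $w_j^{(1)}(T)=w_j^{(2)}(T)$ for the common datum $\phi_j$. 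Applying $\mathcal{R}_T$ — whose canonical relation and (nonvanishing) symbol are again independent of the nonlinearity, and whose action on $w_j^{(i)}$ is legitimate because $WF(w_j^{(i)})\subset\Lambda_j$ misses $N^*\mathbb{R}^3_T$ — yields $\sigma(w_j^{(1)})(y_j^T,\eta_j)=\sigma(w_j^{(2)})(y_j^T,\eta_j)$. Finally, Theorem~\ref{transport equation for principal symbol} identifies $\sigma(w_j^{(i)})$ along $\Gamma_j$ with the solution of \eqref{first order linearization ode} for the coefficient $A_i$ and initial value $v$; since $\Gamma_j(\tfrac{T}{2\tau_j})=(y_j^T,\eta_j)$, this means $W_i(v)=\sigma(w_j^{(i)})(y_j^T,\eta_j)$, and (iii) follows.

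The step calling for real care is this identification in (iii) of the abstract ODE endpoint map $W_i$ with the operator $v\mapsto\sigma(w_j^{(i)})(y_j^T,\eta_j)$ read off from $S_i$: one must confirm that every Fourier-integral/Lagrangian ingredient passed through — the flowout Lagrangian $\Lambda_j$ (generated by the nonlinearity-free $H_q$), the half-density and Maslov factors transported along it, and the symbols of $\mathcal{R}_0$ and $\mathcal{R}_T$ — is the same for $F_1$ and $F_2$, so that these factors cancel when the two solution maps are compared rather than spuriously separating $W_1$ from $W_2$. Everything else is routine bookkeeping: (i) and (ii) use only the lightlike relation and elementary linear ODE theory, and the needed transport equation is already recorded as \eqref{first order linearization ode}.
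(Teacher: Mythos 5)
Your argument is correct and follows essentially the same route as the paper: the key facts are that $L:=-\tau_j-\alpha\cdot\xi_j$ annihilates $A_i(\Gamma_j(s))$ on the lightcone (so the flow preserves $\ker L$), that the resulting linear flow is invertible and hence bijective on $\ker L$ by finite-dimensionality, and that the identification $W_i(v)=\sigma(w_j^{(i)})(y_j^T,\eta_j)$ is read off from $S_i$ via the nonlinearity-independent $f_j$ and $\mathcal{R}_T$. Your write-up is somewhat more explicit than the paper on steps (i)--(ii) (the paper compresses bijectivity into ``similar argument'' after the constant-coefficient $A_0$ case), but the underlying mechanism is identical.
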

Note that $W_i$ is indeed a bijection on the entire $\mathbb{C}^4$, but as we can only set up initial data in $\text{ker}(-\tau_j - \alpha\cdot\xi_j)$, we can only say the two $W_i$ agree on that part. Finally, since we chose $f_j$ to be a classical conormal distribution, $\sigma(w_j)$ is also going to be homogeneous because of \eqref{distorted wave symbol equation} and \eqref{first order linearization ode}.

%%%%%%%%%%%%%%%%%%%%%%%%%%%%%%%%%%%%%%%%%%%%%%%%%%%%%%%%%%%%%%%%%%%%%%%
\section{Third order linearization}\label{section: third order linearization}
Let $\Gamma_1, \Gamma_2, \Gamma_3$ intersect at a point $y_c$, and let $K_1, K_2, K_3$ intersect transversally. Choose some future pointed light-like direction $\eta_0 = \sum_{j=1}^3 k_j \eta_j \in \Lambda_{123}\backslash\cup_{j=1}^3\Lambda_j$, $k_j \neq 0$. We shall compute the principal symbol of $w_{12}$ and $w_{123}$, the result for other $w_{\alpha}$'s are similar. According to the asymptotic analysis we have
\begin{align*}
    w_{12} = &Q(2F^{(2)}(u, w_1, w_2))\\
    \begin{split}
        w_{123} = &Q(6F^{(3)}(u, w_1, w_2, w_3) + 2F^{(2)}(u, w_{12}, w_3) \\
        &+ 2F^{(2)}(u, w_{13}, w_2) + 2F^{(2)}(u, w_{23}, w_1)).
    \end{split}
\end{align*}
Note that even though $w_j$'s are vectors of distributions, each coordinate of $F^{(2)}(u, w_1, w_2)$ and other similar terms are just linear combinations of multiplication of distributions, hence Lemma \ref{multiplication 1} and Lemma \ref{multiplication 2} can be used directly.

We start by analyzing $w_{12}$. By Lemma \ref{multiplication 1}, 
\begin{equation*}
    F^{(2)}(u, w_1, w_2) \in \mathcal{I}^{\mu, \mu+1}(\Lambda_{12}, \Lambda_1)+\mathcal{I}^{\mu, \mu+1}(\Lambda_{12}, \Lambda_2),
\end{equation*}
and by Proposition \ref{causal inverse 2}
\begin{equation*}
    w_{12} \in \mathcal{I}^{\mu, \mu}(\Lambda_{12}, \Lambda_1) + \mathcal{I}^{\mu, \mu}(\Lambda_{12}, \Lambda_2).
\end{equation*}
Moreover for $(y, \zeta) \in \Lambda_{12} \backslash (\Lambda_1 \cup \Lambda_2)$
\begin{equation*}
    \sigma_{\Lambda_{12}}(w_{12})(y, \zeta) = \pi^{-1} p^{-1}(\eta) F^{(2)}(u(y), \sigma(w_1)(y, \zeta_1), \sigma(w_2)(y, \zeta_2))
\end{equation*}
where $\zeta = \zeta_1+\zeta_2$ for $\zeta_j \in N^*_yK_j$.

Now we can analyze $w_{123}$. By Lemma \ref{multiplication 2}, on $\Lambda_{123}\backslash\cup_{j=1}^3 \Lambda_j$
\begin{equation*}
    F^{(2)}(u, w_{12}, w_3) \in \mathcal{I}^{3\mu-1/2}(\Lambda_{123})
\end{equation*}
and the principal symbol satisfies
\begin{equation*}
    \begin{split}
        &\sigma_{\Lambda_{123}}(F^{(2)}(u, w_{12}, w_3)) = (2\pi)^{-2}F^{(2)}(u(y^c), \\
        &p^{-1}(k_1\eta_1 + k_2\eta_2) F^{(2)}(u(y^c), \sigma(w_1)(y^c, k_1\eta_1), \sigma(w_2)(y^c, k_2\eta_2)), \sigma(w_3)(y^c, k_3\eta_3)).
    \end{split}
\end{equation*}
On the other hand, by Lemma \ref{multiplication 1} and Lemma \ref{multiplication 2}, on $\Lambda_{123}\backslash\cup_{j=1}^3 \Lambda_j$ we have
\begin{equation*}
    F^{(3)}(u, w_1, w_2, w_3) \in \mathcal{I}^{3\mu+1/2}(\Lambda_{123}),
\end{equation*}
and the principal symbol satisfies
\begin{equation*}
    \begin{split}
        \sigma_{\Lambda_{123}}(F^{(3)}(u, w_1, w_2, w_3))(y^c, \eta_0) = & (2\pi)^{-2}F^{(3)}(u(y^c), \sigma(w_1)(y^c, k_1\eta_1),\\
        &\sigma(w_2)(y^c, k_2\eta_2), \sigma(w_3)(y^c, k_3\eta_3)).
    \end{split}
\end{equation*}
Hence the top order singularity for the right hand side of
\begin{equation*}
    \begin{cases}
        \begin{split}
            Pw_{123} &= 6F^{(3)}(u, w_1, w_2, w_3) + 2F^{(2)}(u, w_{12}, w_3) \\
            &+ 2F^{(2)}(u, w_{13}, w_2) + 2F^{(2)}(u, w_{23}, w_1)
        \end{split}\\
        w_{123}(0) = 0
    \end{cases}
\end{equation*}
is given by $6F^{(3)}(u, w_1, w_2, w_3)$. By Proposition \ref{causal inverse 1} we have
\begin{equation*}
    \begin{cases}
        (\mathcal{L}_{H_q}+i\Tilde{p}p^s)\sigma(w_{123}) = 0 \text{ along } \Gamma_0 \\
        \sigma(w_{123})(y^c, \eta_0) = C\Tilde{p}\sigma_{\Lambda_{123}}(F^{(3)}(u, w_1, w_2, w_3))(y^c, \eta_0).
    \end{cases}
\end{equation*}
Similar to the first order linearization it is an ODE of the form
\begin{equation}\label{third order linearization ode}
    \begin{cases}
        \frac{d}{ds} \sigma(w_{123})(\Gamma_0(s)) = A(\Gamma_0(s))\sigma(w_{123})(\Gamma_0(s)) \text{ for } c<s<\frac{T}{2\tau_0}\\
        \sigma(w_{123})(\Gamma_0(c)) = C\Tilde{p}\sigma_{\Lambda_{123}}(F^{(3)}(u, w_1, w_2, w_3))(\Gamma_0(c))
    \end{cases}
\end{equation}
where $c$ is the time such that $\Gamma_0(c) = (y_c, \eta_0)$. Again by Proposition \ref{solution map prop} and same argument as $\mathcal{R}_0$, we have that $\sigma(\mathcal{R}_T)(x_0, \xi_0; y_0^T, \eta_0) \neq 0$ and
\begin{equation}\label{third order linearization solution map}
    \sigma(w_{123})(y_0^T, \eta_0) = \sigma(\mathcal{R}_T)(x_0, \xi_0; y_0^T, \eta_0)^{-1}\sigma(\partial_{\epsilon_{123}}|_{\epsilon = 0} \mathcal{S}(\phi + \sum \epsilon_j \phi_j))(x_0, \xi_0),
\end{equation}
so $\sigma(w_{123})(y_0^T, \eta_0)$ is determined by the solution map.

%%%%%%%%%%%%%%%%%%%%%%%%%%%%%%%%%%%%%%%%%%%%%%%%%%%%%%%%%%%%%%%%%%%%%%%
\section{Limit of collisions to boundary}\label{section: limit of collisions to boundary}
We now construct a sequence of collisions along a fixed bicharacteristic and let the collision point getting closer and closer to time 0. First fix some $\eta_j = (1, \xi_j)$, $j = 0, 1, 2, 3$, future pointing light-like such that $\eta_0 = \sum k_j\eta_j$ with $k_j \neq 0$. Let
\begin{equation*}
    \Gamma_0(s) = (2s, x_0 - 2s\xi_0, 1, \xi_0) 
\end{equation*}
travel from $(y_0, \eta_0) = (0, x_0, 1, \xi_0)$ to $(y_0^T, \eta_0) = (T, x_0^T, 1, \xi_0)$, and denote $y^c = (2c, x_0 - 2c\xi_0)$ the point at time $s=c$. Similarly for $j=1,2,3$ denote
\begin{equation*}
    \Gamma_j^c(s) = (2s, x_j^c - 2s\xi_j, 1, \xi_j) 
\end{equation*}
such that at some time $T_j^c$, $\Gamma_j^c(T_j^c) = (y^c, \eta_j)$, that is the projection is a geodesic from $y_j^c = (0, x_j^c)$ to $y^c$, see Figure \ref{fig:graph 1}.

\begin{figure}[htbp] % Position: here, top, bottom, page
    \centering
    % Insert image with adjusted width (e.g., 0.8 times the text width)
    \includegraphics[width=0.8\textwidth]{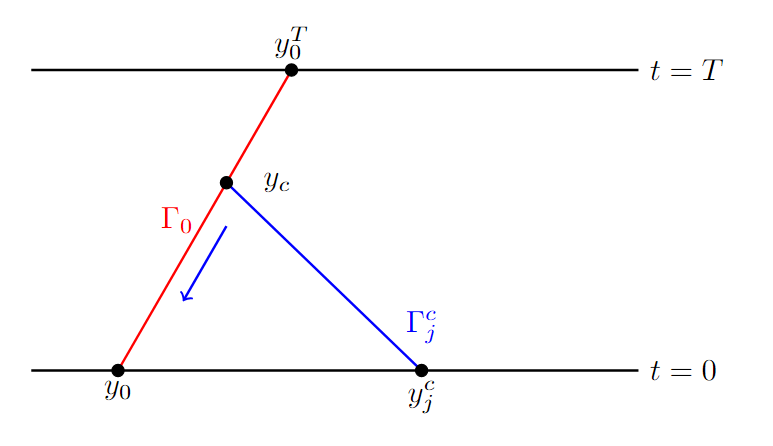} 
    \caption{$\Gamma_j^c$ intersects $\Gamma_0$ at point $y_c$. As $c \mapsto 0$ the collision point approaches to $y_0$, and $\Gamma_j^c$ goes to a single point.}
    \label{fig:graph 1}
\end{figure}

Note that $\Gamma_j^c(c) = (y^c, \eta_j)$, and as $c \xrightarrow{} 0$ we have
\begin{equation*}
    y^c \xrightarrow{} y_0, \quad y_j^c \xrightarrow{} y_0,
\end{equation*}
and $\Gamma_j^c$ becomes just the point $y_0$. Intuitively, if we consider the scenario to be that $\Gamma_1^c, \Gamma_2^c, \Gamma_3^c$ intersect at $y^c$ and leaves the point along $\Gamma_0$ from $y^c$ to $y_0^T$, then the limiting case is just collision at the boundary point $y_0$. For each collision point $y^c$, we will construct corresponding initial data $\phi_j^c$, and so the solutions now depend on $c$, that is the linearization terms are now $w_j^c$, $w_{ij}^c$ and $w_{123}^c$.

Using result from first order linearization \eqref{first order linearization ode}, we can setup initial data such that $\sigma(w_j^c)$ satisfies the ODE
\begin{equation*}
    \begin{cases}
        \frac{d}{ds} \sigma(w_j^c)(\Gamma_j^c(s)) = A(\Gamma_j^c(s))\sigma(w_j^c)(\Gamma_j^c(s)) \text{ for } 0<s<c\\
        \sigma(w_j^c)(\Gamma_j^c(0)) = v_j \text{ in } \text{ker}(-\tau_j - \alpha\cdot\xi_j).
    \end{cases}
\end{equation*}
We want to show that as $c \xrightarrow{} 0$, $\sigma(w_j^c)(\Gamma_j^c(c)) \xrightarrow{} v_j$.
When $c$ is small we have $\Gamma_j^c$ is entirely in a neighborhood of $y_0$, and as $F'(u)$ is smooth, we have for the max matrix norm $|A(\Gamma_j^c(s))| \leq N$ for all $0 \leq s \leq c$ for some constant $N$. Note that
\begin{align*}
    |\sigma(w_j^c)(\Gamma_j^c(s)) - v_j| &= |\int_0^c \frac{d}{ds}(\sigma(w_j^c)(\Gamma_j^c(s')))ds'|\\
    &= |\int_0^c A(\Gamma_j^c(s'))\sigma(w_j^c)(\Gamma_j^c(s'))ds'|\\
    &\leq N\int_0^c|\sigma(w_j^c)(\Gamma_j^c(s'))|ds'.
\end{align*}
By Grönwall's inequality
\begin{equation*}
    |\frac{d}{ds}\sigma(w_j^c)(\Gamma_j^c(s))| \leq N|\sigma(w_j^c)(\Gamma_j^c(s))| \implies |\sigma(w_j^c)(\Gamma_j^c(s))| \leq e^{Ns}|v_j|.
\end{equation*}
Hence
\begin{equation*}
    |\sigma(w_j^c)(\Gamma_j^c(s)) - v_j| \leq (e^{Nc} - 1)|v_j|
\end{equation*}
which converges to 0 as $c \xrightarrow{} 0$.

Similarly the third order linearization tells us that $\sigma(w_{123}^c)$ satisfies
\begin{equation*}
    \begin{cases}
        (\mathcal{L}_{H_q}+i\Tilde{p}p^s)\sigma(w_{123}^c) = 0 \text{ along } \Gamma_0 \\
        \sigma(w_{123}^c)(y^c, \eta_0) = C\Tilde{p}\sigma_{\Lambda_{123}}(F^{(3)}(u, w_1^c, w_2^c, w_3^c))(y^c, \eta_0).
    \end{cases}
\end{equation*}
Again we can write it as an ODE like \eqref{third order linearization ode} and denote $w^c(s) = \sigma(w_{123}^c)(\Gamma_0(s))$ for simplicity
\begin{equation}\label{ODE w^c}
    \begin{cases}
        \frac{d}{ds} w^c(s) = A(\Gamma_0(s))w^c(s) \text{ for } c < s < T/2 \\
        \begin{split}
            w^c(c) = C'(1-\alpha\cdot\xi_0)F^{(3)}&(u(y^c), \sigma(w_1^c)(\Gamma_1^c(c)),\\
            &\sigma(w_2^c)(\Gamma_2^c(c)), \sigma(w_3^c)(\Gamma_3^c(c))),
        \end{split}
    \end{cases}
\end{equation}
where $C' = C \prod_{j=1}^3 k_j^{\mu+1/2} \neq 0$ because $\sigma(w_j^c)$ is homogeneous. Now consider another ODE
\begin{equation}\label{ODE limiting case}
    \begin{cases}
        \frac{d}{ds} w(s) = A(\Gamma_0(s))w(s) \text{ for } 0<s<T/2 \\
        w(0) = C'(1-\alpha\cdot\xi_0)F^{(3)}(\phi(x_0), v_1, v_2, v_3).
    \end{cases}
\end{equation}
We want to show that $w^c(T/2) \xrightarrow{} w(T/2)$ as $c \xrightarrow{} 0$. First of all, by previous computations it is obvious that as $c \xrightarrow{} 0$,
\begin{equation*}
    w^c(c) \xrightarrow{} w(0).
\end{equation*}
Note that $w$ and $w^c$ essentially solve the same ODE with different initial data at time $c$, where
\begin{equation*}
    w(c) = w(0) + \int_0^c w'(s)ds = w(0) + \int_0^c A(\Gamma_0(s))w(s) ds.
\end{equation*}
In a neighborhood of $\Gamma_0$ we have $|A(\Gamma_0(s))| \leq \Tilde{N}$ since it is smooth. Thus use Grönwall's inequality again we obtain
\begin{align*}
    |w(c) - w^c(c)| &\leq |w(c) - w(0)| + |w(0)-w^c(c)| \\
    &\leq (e^{\Tilde{N}c} - 1)|w(0)| + |w(0) - w^c(c)| \\
    &\xrightarrow{} 0
\end{align*}
as $c \xrightarrow{} 0$. Hence
\begin{align*}
    |w(T/2) - w^c(T/2)| &\leq |w(c) - w^c(c)| + |\int_c^{T/2} A(\Gamma_0(s))(w(s) - w^c(s))ds|\\
    &\leq |w(c) - w^c(c)| + (e^{MT/2} - e^{\Tilde{N}c})|w(c) - w^c(c)|\\
    &\xrightarrow{} 0
\end{align*}
as $c \xrightarrow{} 0$. By \eqref{third order linearization solution map},
\begin{equation*}
    w(T/2) = \lim_{c \xrightarrow{} 0} \sigma(\mathcal{R}_T)(x_0, \xi_0; y_0^T, \eta_0)^{-1}\sigma(\partial_{\epsilon_{123}}|_{\epsilon = 0} \mathcal{S}(\phi + \sum \epsilon_j \phi_j^c))(x_0, \xi_0)
\end{equation*}
meaning the solution of \eqref{ODE limiting case} at time $T/2$ can be determined by the solution map. Finally, note that \eqref{ODE limiting case} is the same ODE as the one in Lemma \ref{solution map determines initial data to final data}, the initial data can also be determined by the injectivity result proved there. Again all the computations are the same when $F$ also depends on $x$. For any $z < M$ we can find some $\phi \in H^s_{\delta} \cap C^{\infty}(\mathbb{R}^3)$ such that $\phi(x_0) = z$, so we obtain the following lemma.

\begin{lemma}\label{lemma 6.1}
    For $j=1,2,3$, let $\xi_j$ be linearly independent lightlike covectors. Under the same assumption as in Theorem \ref{main theorem}, we have
    \begin{equation*}
        (1-\alpha\cdot\xi_0)\partial_z^3F_1(x, z, v_1, v_2, v_3) = (1-\alpha\cdot\xi_0)\partial_z^3F_2(x, z, v_1, v_2, v_3)
    \end{equation*}
    for all $v_j \in \text{ker}(-1 - \alpha \cdot \xi_j)$, $\xi_0 = \sum_{j=1}^3 k_j\xi_j$ where $k_j \neq 0$, $x \in \mathbb{R}^3$ and $|z| \leq M$.
\end{lemma}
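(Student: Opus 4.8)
The statement is the payoff of the constructions carried out in this section, so the plan is to assemble them and close with a short linear-algebra argument. First, for each sufficiently small $c>0$ I would record the data of the three-wave interaction built above: using Section \ref{section: distorted plane wave and first order linearization} one produces initial data $\phi_j^c$ — independent of $F$, since they come from the $F$-free operator $P'$ — with $\sigma(w_j^c)(\Gamma_j^c(0)) = v_j \in \ker(-1-\alpha\cdot\xi_j)$, and then performs the collision at $y^c$. The restriction of $\sigma(w_{123}^c)$ to $\Gamma_0$ solves \eqref{ODE w^c}, and by Proposition \ref{solution map prop} and \eqref{third order linearization solution map} its endpoint value $w^c(T/2) = \sigma(w_{123}^c)(y_0^T,\eta_0)$ equals $\sigma(\mathcal{R}_T)(x_0,\xi_0;y_0^T,\eta_0)^{-1}$ times the principal symbol of the third Fr\'echet derivative of the solution map. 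Since the same $\phi_j^c$ serve any nonlinearity and $\mathcal{R}_T$ is geometric, the hypothesis $S_1=S_2$ shows $w^c(T/2)$ is determined by the solution map for each fixed $c$.

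Next I would pass to the limit $c\to0$. The Gr\"onwall estimates already established give $\sigma(w_j^c)(\Gamma_j^c(c))\to v_j$, while $y^c\to y_0$ and continuity of $u$ give $u(y^c)\to u(0,x_0)=\phi(x_0)$; as $F^{(3)}$ is smooth, the initial value of \eqref{ODE w^c} then converges to $w(0):=C'(1-\alpha\cdot\xi_0)F^{(3)}(\phi(x_0),v_1,v_2,v_3)$. Continuous dependence of solutions of \eqref{ODE limiting case} on the initial time and data (again Gr\"onwall along $\Gamma_0$) gives $w^c(T/2)\to w(T/2)$, so $w(T/2)$ is itself determined by the solution map.

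The crucial step is to observe that \eqref{ODE limiting case} is an instance of the ODE of Lemma \ref{solution map determines initial data to final data} with \emph{admissible} initial data. Here I would use that $\xi_0=\sum_j k_j\xi_j$ is lightlike with time component $1$: the Clifford relations $\alpha_j\alpha_k+\alpha_k\alpha_j=2\delta_{jk}I_4$ give $(\alpha\cdot\xi_0)^2=|\xi_0|^2I_4=I_4$, so $(1+\alpha\cdot\xi_0)(1-\alpha\cdot\xi_0)=0$ and the range of the matrix $1-\alpha\cdot\xi_0$ lies in $\ker(1+\alpha\cdot\xi_0)=\ker(-1-\alpha\cdot\xi_0)$; hence $w(0)$ lies in the subspace on which Lemma \ref{solution map determines initial data to final data} is stated. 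That lemma provides a bijective initial-to-final map on $\ker(-1-\alpha\cdot\xi_0)$ which, under the same assumption as in Theorem \ref{main theorem}, is the same for $F_1$ and $F_2$; together with the fact that $w(T/2)$ is determined, this forces $w(0)$ to be determined, that is
\begin{equation*}
  (1-\alpha\cdot\xi_0)F_1^{(3)}(\phi(x_0),v_1,v_2,v_3)=(1-\alpha\cdot\xi_0)F_2^{(3)}(\phi(x_0),v_1,v_2,v_3)
\end{equation*}
after cancelling the nonzero constant $C'$. Since $F^{(3)}(u,v_1,v_2,v_3)$ is a fixed nonzero multiple of $\partial_z^3F(x,z,v_1,v_2,v_3)$ at $z=\phi(x_0)$, this is the claimed identity for $z=\phi(x_0)$. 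Finally, $x_0$ is arbitrary and, for any $z$ with $|z|<M$, one can pick $\phi\in H^s_\delta\cap C^\infty(\mathbb{R}^3)$ with $\phi(x_0)=z$ by the definition of $M$; the case $|z|=M$ follows by continuity of $\partial_z^3F$ in $z$, and reinstating the $x$-dependence (the computations of Sections \ref{section: asymptotic analysis}--\ref{section: limit of collisions to boundary} are unchanged) yields the identity on all of $\mathbb{R}^3\times B_M$.

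The substantive difficulties have already been handled above: the microlocal bookkeeping behind \eqref{third order linearization solution map} — that $\sigma(w_{123}^c)(y_0^T,\eta_0)$ is the principal symbol of $\partial_{\epsilon_{123}}\mathcal{S}$, via Lemmas \ref{multiplication 1}--\ref{multiplication 2}, Propositions \ref{causal inverse 1}--\ref{causal inverse 2} and the propagation-of-singularities results — and the uniform-in-$c$ control of the transport ODEs. For the lemma itself, the point I expect to need care is the null-covector identity $(\alpha\cdot\xi_0)^2=I_4$: it is exactly what places the limiting initial value $w(0)$ inside $\ker(-1-\alpha\cdot\xi_0)$, and hence inside the class of data for which the injectivity result of Section \ref{section: distorted plane wave and first order linearization} applies. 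Without it one could recover only the image of $(1-\alpha\cdot\xi_0)F^{(3)}(\phi(x_0),v_1,v_2,v_3)$ under the propagation map, not the vector itself, so $\partial_z^3 F$ would not be pinned down.
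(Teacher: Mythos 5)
Your proof follows the paper's Section \ref{section: limit of collisions to boundary} argument step for step: construct the family of collisions parametrized by $c$, use Gr\"onwall to pass to the limit $c\to 0$, read off $w(T/2)$ from the solution map via \eqref{third order linearization solution map}, and invert the propagation map supplied by Lemma \ref{solution map determines initial data to final data}. The one place where you are more explicit than the paper is the Clifford-algebra check that $(\alpha\cdot\xi_0)^2 = I_4$ places $w(0)$ in $\ker(-1-\alpha\cdot\xi_0)$ — the paper gets this implicitly from the form $C\Tilde p\,\sigma(f)$ of the initial symbol in Proposition \ref{causal inverse 1} — but this is a clarification rather than a different route, so the two proofs are essentially the same.
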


%%%%%%%%%%%%%%%%%%%%%%%%%%%%%%%%%%%%%%%%%%%%%%%%%%%%%%%%%%%%%%%%%%%%%%%
\section{Proof of main theorem}\label{section: proof of main theorem}
Here is a linear algebra lemma that will be used later.

\begin{lemma}\label{linear algebra lemma}
        Let $K_1, K_2$ be some linear subspace of $\mathbb{C}^n$ passing through the origin such that $K_1 \cap K_2 = \{0\}$, then for any $v_1, v_2 \in \mathbb{C}^n$, $v_1 + K_1 \cap v_2 + K_2$ has at most 1 element.
\end{lemma}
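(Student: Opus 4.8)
\textbf{Proof proposal for Lemma \ref{linear algebra lemma}.}

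The plan is to argue directly by contradiction-free subtraction: assume the affine sets $v_1 + K_1$ and $v_2 + K_2$ share two points and show the two points must coincide, using only that $K_1 \cap K_2 = \{0\}$ and that $K_1, K_2$ are linear subspaces (hence closed under differences).

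Concretely, I would take $x, y \in (v_1 + K_1) \cap (v_2 + K_2)$. Writing membership in $v_1 + K_1$ gives $x = v_1 + a_1$ and $y = v_1 + a_1'$ with $a_1, a_1' \in K_1$, so $x - y = a_1 - a_1' \in K_1$ since $K_1$ is a subspace. Symmetrically, membership in $v_2 + K_2$ gives $x - y = a_2 - a_2' \in K_2$. Therefore $x - y \in K_1 \cap K_2 = \{0\}$, i.e. $x = y$. This shows the intersection contains at most one element, which is exactly the claim. If the intersection is empty the statement is vacuously true, so no separate case is needed.

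There is essentially no obstacle here; the only thing to be slightly careful about is not to assume the intersection is nonempty or to conflate the affine sets with subspaces — the argument only uses the group (subspace) structure of $K_1$ and $K_2$ to conclude that the difference of two elements of the same coset lies in the underlying subspace. This lemma will later be applied with $K_1 = \ker(-1-\alpha\cdot\xi_1)$-type subspaces arising from distinct lightlike covectors, where the transversality ensures the trivial-intersection hypothesis; that geometric input is handled elsewhere and is not part of proving the lemma itself.
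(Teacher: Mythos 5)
Your proof is correct and is essentially identical to the paper's: both take two putative common points, subtract, observe that the difference lies in $K_1$ and in $K_2$ by the coset/subspace structure, and conclude equality from $K_1 \cap K_2 = \{0\}$. The only differences are notational.
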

\begin{proof}
    Suppose $w, w' \in v_1 + K_1 \cap v_2 + K_2$. Since $K_1$ is a linear subspace, $w, w' \in v_1 + K_1$ implies $w = v_1 + k_1$ and $w' = v_1 + k_2$ where $k_1, k_2 \in K$, hence $w - w' = k_1-k_2 \in K_1$. Similarly, $w - w' \in K_2$. Thus $w = w'$ because $K_1 \cap K_2 = \{0\}$.
\end{proof}
Now we are ready to prove Theorem \ref{main theorem}. We continue to use $F^{(k)}(x, z)$ to denote derivatives with respect to $z$.
\begin{proof}
    By Lemma \ref{lemma 6.1}, for linearly independent lightlike covectors $\eta_j = (1, \xi_j)$, $j = 1, 2, 3$, and $\eta_0 = \sum k_j \eta_j$ with $k_j \neq 0$, we can determine
    \begin{equation*}
        (1-\alpha\cdot\xi_0)F^{(3)}(x, z, v_1, v_2, v_3)
    \end{equation*}
    for $v_j \in \text{ker}(-1 - \alpha \cdot \xi_j)$, $x \in \mathbb{R}^3$, $|z| \leq M$. Since $v \in \text{ker}(1-\alpha \cdot \xi_0)^{\perp} \iff v \in \text{ker}(1+\alpha \cdot \xi_0) \iff (1-\alpha\cdot\xi_0)v = 2v$,
    \begin{equation*}
        F^{(3)}(x, z, v_1, v_2, v_3) \in \frac{1}{2}(1-\alpha\cdot\xi_0)F^{(3)}(x,z, v_1, v_2, v_3) + \text{ker}(1-\alpha\cdot\xi_0).
    \end{equation*}
    Similarly use some other appropriate $\eta_0'$ instead of $\eta_0$ we can determine $(1-\alpha\cdot\xi_0')F^{(3)}(x, z, v_1, v_2, v_3)$ and hence
    \begin{equation*}
        F^{(3)}(x, z, v_1, v_2, v_3) \in \frac{1}{2}(1-\alpha\cdot\xi_0')F^{(3)}(x, z, v_1, v_2, v_3) + \text{ker}(1-\alpha\cdot\xi_0').
    \end{equation*}

    \begin{figure}[htbp] % Position: here, top, bottom, page
        \centering
        % Insert image with adjusted width (e.g., 0.8 times the text width)
        \includegraphics[width=0.8\textwidth]{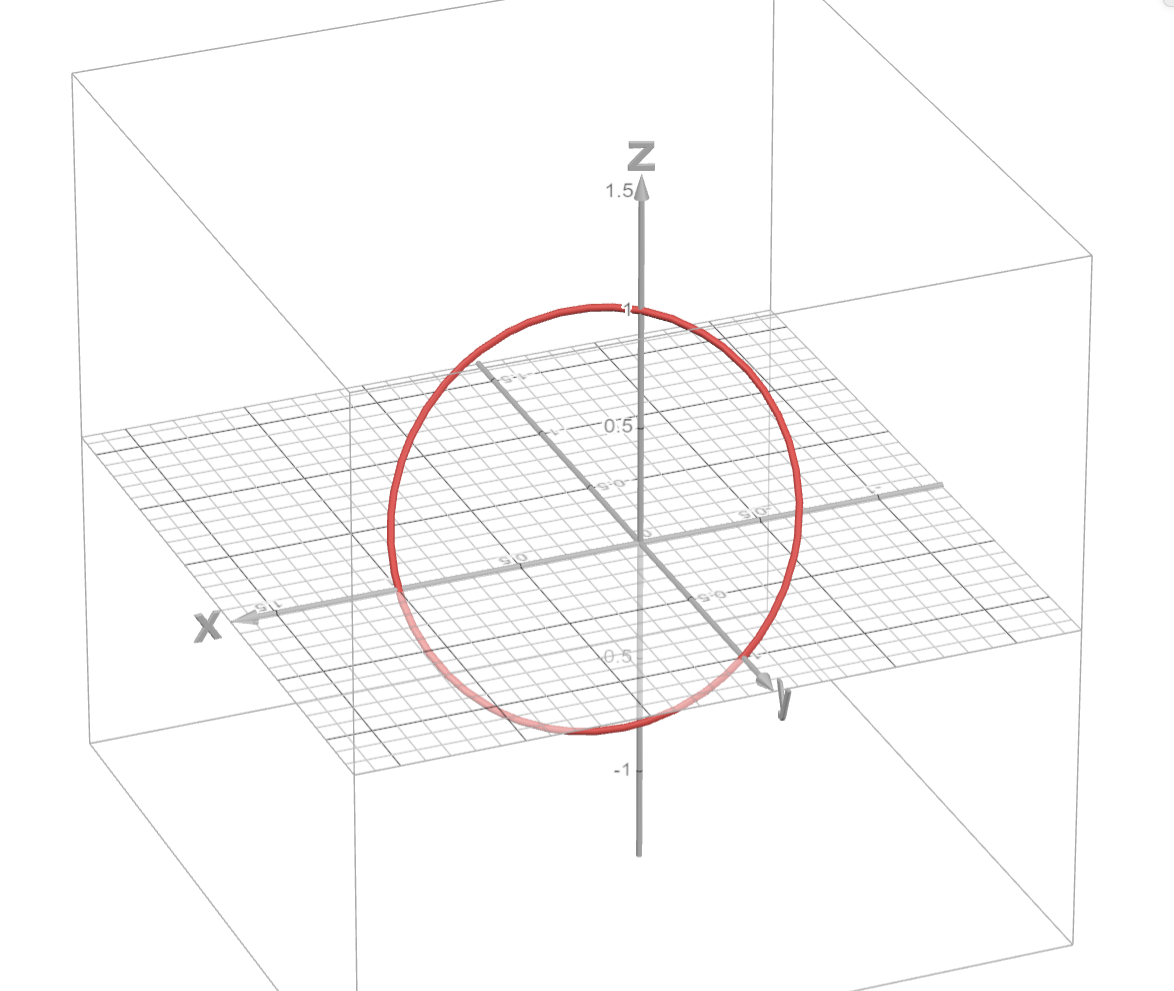} 
        \caption{The set of available directions for $\eta_0$ when $\eta_j$ are given by $(1, e_j)$.}
        \label{fig:graph 2}
    \end{figure}

    We now choose the appropriate $\eta_1, \eta_2, \eta_3$ and $\eta_0$. The principal symbol of $P$ at $(t, x, \tau, \xi) \in L\mathbb{R}^4$ (we shall omit $(t, x)$ since principal symbol doesn't depend on them) for $\xi = (\xi^1, \xi^2, \xi^3)$ is
    \begin{equation*}
        p(\tau, \xi)=\begin{pmatrix}
            -\tau & 0 & -\xi^3 & -\xi^1+i\xi^2 \\
            0 & -\tau & -\xi^1-i\xi^2 & \xi^3 \\
            -\xi^3 & -\xi^1+i\xi^2 & -\tau & 0\\
            -\xi^1-i\xi^2 & \xi^3 & 0 & -\tau
        \end{pmatrix}
    \end{equation*}
    and the kernel is given by
    \begin{equation*}
        \text{ker}(p(\tau, \xi)) = \text{span}\{(\xi^3, \xi^1+i\xi^2, -\tau, 0), (-\xi^1+i\xi^2, \xi^3, 0, \tau)\}.
    \end{equation*}
    Consider now $\eta_1 = (1, 1, 0, 0), \eta_2 = (1, 0, 1, 0), \eta_3 = (1, 0, 0, 1)$. Then
    \begin{equation*}
        \mathcal{M} = \{(1, a, b, c): a+b+c = a^2+b^2+c^2=1, a, b, c \neq 1\}
    \end{equation*}
    forms a 1-dimensional set of available directions for $\eta_0$, see Figure \ref{fig:graph 2}. For example, we can take $\eta_0 = (1, -\frac{1}{3}, \frac{2}{3}, \frac{2}{3})$, $\eta_0' = (1, \frac{2}{3}, -\frac{1}{3}, \frac{2}{3})$, and use the fact that $1-\alpha\cdot\xi = p(-1, \xi)$ to obtain
    \begin{align*}
        &\text{ker}(1-\alpha\cdot\xi_0) = \text{span}\{(2, -1+2i, 3, 0), (1+2i, 2, 0, -3)\} \\
        &\text{ker}(1-\alpha\cdot\xi_0') = \text{span}\{(2, 2-i, 3, 0), (-2-i, 2, 0, -3)\}
    \end{align*}
    which have trivial intersection, and hence by Lemma \ref{linear algebra lemma} one can determine $F^{(3)}(x, z, v_1, v_2, v_3)$.
    
    So far, we are able to determine any $F^{(3)}(x, z, v_1, v_2, v_3)$ for $v_j \in \text{ker}(p(\eta_j))$ where $\eta_j = (1,e_j)$ and $e_1 = (1, 0, 0), e_2 = (0, 1, 0), e_3 = (0, 0, 1)$. On the other hand, the same thing can be done if one replaces some of the $\eta_j$ with $\eta_j' = (1, -e_j)$. That is, we are able to determine any $F^{(3)}(x, z, v_1, v_2, v_3)$ for $v_j  \in \text{ker}(p(\eta_j))\cup\text{ker}(p(\eta_j'))$. Straightforward computation shows that $\text{ker}(p(\eta_j))+\text{ker}(p(\eta_j')) = \mathbb{C}^4$, thus by linearity we can determine $F^{(3)}(x, z, v_1, v_2, v_3)$ for any $v_j \in \mathbb{C}^4$, $x \in \mathbb{R}^3, |z| \leq M$.
    
    If we have the additional assumptions that $F'(x, 0)$ and $F^{(2)}(x, 0)$ are already determined, then along with the fact that $F(x, 0) = 0$, $F(x, z)$ can be determined for any $x \in \mathbb{R}^3, |z| \leq M$.
\end{proof}

\section*{Declarations}
The author states that there is no conflict of interest, and there is no associated data used in the paper.

\printbibliography
\end{document}